\documentclass[a4paper, 11pt]{article}
\usepackage{amsmath,amssymb,color}
\usepackage[dvips]{graphicx}
\usepackage{verbatim}
\usepackage{amsthm}
\usepackage{amsmath,amssymb,color}
\usepackage{color} 
\RequirePackage[OT1]{fontenc}
\RequirePackage{amsthm,amsmath, amssymb, enumerate}
\RequirePackage[round, sort&compress, authoryear]{natbib}
\RequirePackage[colorlinks,citecolor=blue,urlcolor=blue]{hyperref}

\usepackage{eufrak}
\usepackage{graphicx, amsthm, url,pstricks,fancyhdr}
\RequirePackage{bbm,amsthm}
\RequirePackage{amssymb}

\RequirePackage{enumitem}
\RequirePackage{bigints}
\RequirePackage{mathtools}
\RequirePackage[stable]{footmisc}
\usepackage{bm}

\setlength{\textwidth}{160mm}
\setlength{\textheight}{230mm}
\setlength{\topmargin}{-10mm}
\setlength{\oddsidemargin}{0mm}
\setlength{\evensidemargin}{0mm}


\theoremstyle{plain}
\newtheorem{theorem}{Theorem}[section]                                          
\newtheorem{proposition}[theorem]{Proposition}                          

\newtheorem{corollary}[theorem]{Corollary}

\theoremstyle{definition}

\theoremstyle{remark}
\newtheorem{remark}[theorem]{Remark}

\makeatletter \@addtoreset{equation}{section} \makeatother

\newcommand{\Prob}{\mathbb{P}\,}
\newcommand{\gr}[1]{{\color{black} #1}}





%


\newcommand{\C}{\mathbb{C}}

\newcommand{\bea}{\begin{eqnarray}}
\newcommand{\ena}{\end{eqnarray}}
\newcommand{\beq}{\begin{equation}}
\newcommand{\enq}{\end{equation}}
\newcommand{\beas}{\begin{eqnarray*}}
\newcommand{\enas}{\end{eqnarray*}}

\newcommand{\PP}{{\mathbb{P}} }

\newcommand{\bX}{{\bf{X}}}

\newcommand{\EE}{{\mathbb{E}} }
\newcommand{\RR}{{\mathbb{R}} }

\def\heiko{\mathrel{\raise.3ex\hbox{\scalebox{.7}{%
    \rotatebox[origin=c]{-7}{/}%
    \kern-.35em\rotatebox[origin=c]{-7}{/}}}}}%

\title{Bounds for the asymptotic distribution of the likelihood ratio} 
\author{Andreas Anastasiou$^{1}$ and Gesine Reinert$^{2}$ 
\\
\small{$^{1}$ The London School of Economics and Political Science}
\\
\small{$^{2}$ University of Oxford}\\
}
\date{\today}
\begin{document}
\pagenumbering{roman}
\pagenumbering{arabic}
\maketitle

\begin{abstract}
\indent In this paper we give an explicit bound on the distance to chisquare for the likelihood ratio statistic when the data are realisations of independent and identically distributed random elements. To our knowledge this is the first explicit bound which is available in the literature. The bound depends on the number of samples as well as on the dimension of the parameter space. We illustrate the bound with three examples: samples from an exponential distribution, samples from a normal distribution, and logistic regression. \end{abstract}

{\it Key words}: Log-likelihood ratio statistics, Wilks' Theorem, Stein's method. 

\section{Introduction}

One of the most celebrated theorems in theoretical statistics is Wilks' Theorem, which states that under appropriate conditions, $2 \times $ a log-likelihood ratio statistic is approximately chisquare distributed. This result is very useful when testing the null hypothesis $H_0: \boldsymbol{\theta} \in \Theta_0$ against the alternative hypothesis $H_1: \boldsymbol{\theta} \in \Theta$, where $\Theta_0 \subset \Theta$ using a generalised likelihood ratio test. Such tests arise for example in the area of model reduction, with the aim of finding a relatively simple model which explains the data reasonably well, see for example Chapter 6.5 in \cite{cox2006principles}.  For this test the degrees of freedom of the asymptotic chisquare distribution under the null hypothesis is $df = {\rm dim}(\Theta) - {\rm dim}(\Theta_0)$;  see \cite{wilks1938large}, as well as \cite{lehmann2006testing} and \cite{van1998asymptotic} for more details. 

Here is a sketch of the argument. Let $\boldsymbol{X} = ( \boldsymbol{X_1}, \ldots, \boldsymbol{X_n})$ be independent and identically distributed (i.i.d.) observations from a distribution with probability density function $f(\boldsymbol{x}| \boldsymbol{\theta})$, where $\boldsymbol{\theta} {\gr{= (\theta_1, \ldots, \theta_d)^\intercal }} \in \Theta \subset \RR ^d$. Note that we do not make any assumptions on the space where the observations live. The test problem is 
$$H_0: \theta_{0,j} = 0, \quad j=1, \ldots, r$$
against the general alternative $H_1: \boldsymbol{\theta} \in \Theta$. Assume that ${\rm dim}(\Theta) = d$; then $\Theta_0=\{\boldsymbol{\theta} \in \Theta: \theta_{0,j} = 0 \mbox{ for } j=1, \ldots, r\}$ has dimension $d-r$. Writing 
$
\boldsymbol{\theta} = \left(\boldsymbol{\theta}_{[1:r]}, \boldsymbol{\theta}_{[r+1:d]}\right)^\intercal 
$
where $\boldsymbol{\theta}_{[1:r]}$ is the  vector of the first $r$ components of $\boldsymbol{\theta}$ and $\boldsymbol{\theta}_{[r+1:d]}$ is the vector of the remaining $d-r$ components of $\boldsymbol{\theta}$, 
the null hypothesis translates to $H_0: \boldsymbol{\theta}_{0, [1:r]} =\boldsymbol{0}$.

Let $L(\boldsymbol{\theta}; \boldsymbol{x}) = \prod_{i=1}^n f(\boldsymbol{x_i}|\boldsymbol{\theta})$ denote the likelihood function which is assumed to be regular, so that the maximum likelihood estimate exists and is unique, and having derivatives of up to third order with respect to $\boldsymbol{\theta}$. The conditions will be made precise in Section \ref{multi}. Set 
\begin{align}
\nonumber & \boldsymbol{\hat{\theta}^{{\rm res}}(x)} = {\rm argmax}_{\boldsymbol{\theta} \in \Theta_0} L(\boldsymbol{\theta};\boldsymbol{x})  = \left(\boldsymbol{0_}{[1:r]}, \boldsymbol{\hat{\theta}^{*}_{[r+1:d]}(x)}\right)^\intercal\\
\nonumber & \boldsymbol{\hat{\theta}_n(x)} = {\rm argmax}_{\boldsymbol{\theta} \in \Theta} L(\boldsymbol{\theta}; \boldsymbol{x}).
\end{align}
In the sequel we often abbreviate $\boldsymbol{\hat{\theta}^{*}(x)}= \boldsymbol{\hat{\theta}^{*}_{[r+1:d]}(x)}$. 
The log-likelihood ratio statistic is 
\begin{equation}
\label{loglikelihoodsplitT1T2}
- 2 \log \Lambda = 2 \log \left( \frac{T_1}{T_2}  \right) 
\mbox{ with }  T_1 = \frac{ L(\boldsymbol{\hat{\theta}_n(x)};\boldsymbol{x})}{L(\boldsymbol{\theta_0};\boldsymbol{x})}  \mbox{ and } T_2 = \frac{ L(\boldsymbol{\hat{\theta}^{res}(x)};\boldsymbol{x})}{ L(\boldsymbol{\theta_0};\boldsymbol{x})}
\end{equation}
with $\boldsymbol{\theta_0}$ the unknown true parameter. Thus, $T_1$ is the likelihood ratio for testing the simple null hypothesis that $\boldsymbol{\theta} = \boldsymbol{\theta_0}$ against the alternative that $\boldsymbol{\theta} \in \Theta$, whereas $T_2$ is the likelihood ratio for testing the simple null hypothesis that $\boldsymbol{\theta} = \boldsymbol{\theta_0}$ against the alternative that $\boldsymbol{\theta} \in \Theta_0$. The Fisher information matrix for one random vector is denoted by $I(\boldsymbol{\theta_0})$,
which again is assumed to exist. 
We write 
$\ell(\boldsymbol{\theta}; \boldsymbol{x}) = \log (L(\boldsymbol{\theta};\boldsymbol{x})) = \sum_{i=1}^n \log (f(\boldsymbol{x_i}| \boldsymbol{\theta}))$ and we also abbreviate
$\ell_{\bf{x_i}}(\boldsymbol{\theta}) = \log (f(\boldsymbol{x_i}| \boldsymbol{\theta})).$ In addition, let ${\bf{Y}}_j = \nabla \ell_{\bf{X_j}}(\theta) = (Y_{i,j}, j=1, \ldots, d)$ with $\nabla = (\frac{\partial}{\partial \theta_i}, i=1, \ldots, d)^{\intercal}$ the gradient operator, so that $Y_{i,j} = \frac{\partial}{\partial \theta_i} \log (f(\boldsymbol{X_j}
| \boldsymbol{\theta}))$. A  key observation is that
 ${\bf{Y}}_j , j=1, \ldots, n$ are i.i.d. vectors of possibly dependent entries.
Under the regularity assumptions in this paper, for every fixed $i$ the sum $\sum_{j=1}^n Y_{i,j}$ satisfies a law of large numbers as well as a central limit theorem.

As $\frac{\partial}{\partial \theta_i} \ell\left(\boldsymbol{\hat{\theta}_n(x)};\boldsymbol{x}\right) = 0$, for $i=1, 2, \ldots, d$, a Taylor expansion gives 
\bea 
2 \log T_1
 &\approx&   n \left(\boldsymbol{\hat{\theta}_n(x)}- \boldsymbol{\theta_0}\right)^{\intercal} I(\boldsymbol{\theta_0}) \left(\boldsymbol{\hat{\theta}_n(x)} - \boldsymbol{\theta_0}\right).\label{t1exp0} 
\ena
A similar approximate expression for $2 \log T_2$ holds,  but with $\boldsymbol{\hat{\theta}_n(x)}$ replaced by $\boldsymbol{\hat{\theta}^{{res}}(x)}$. 
The score function for $\boldsymbol{\theta_0}$ is
\begin{eqnarray} \label{score} S(\boldsymbol{\theta_0}) = S(\boldsymbol{\theta_0},\boldsymbol{x}) = \nabla \log L(\boldsymbol{\theta_0};\boldsymbol{x}) = \sqrt{n}
 \left( \begin{array}{c  }\boldsymbol{\xi} (\boldsymbol{\theta_0}, \boldsymbol{x})   \\   \boldsymbol{\eta} (\boldsymbol{\theta_0}, \boldsymbol{x}) \end{array} \right)  
\end{eqnarray}
with column vectors  $\boldsymbol{\xi} = (\xi_1, \ldots, \xi_r)^{\intercal} \in \RR^r$ and $\boldsymbol{\eta}= (\eta_1, \ldots, \eta_{d-r})^{\intercal} \in \RR^{d-r}$. We omit the arguments $\boldsymbol{x}$ and $\boldsymbol{\theta}$ when they are obvious from the context. Under the regularity conditions in this paper, the $(d \times 1)$ score function  and the $d \times d$ Fisher information matrix are connected through the equality  
\begin{eqnarray} \label{Fisherinfo} \EE [S(\boldsymbol{\theta_0}) S(\boldsymbol{\theta_0})^{\intercal} ] = nI(\boldsymbol{\theta_0}).
\end{eqnarray} 
By Taylor expansion, 
\begin{equation}
\nonumber \sqrt{n}\left( \begin{array}{c  }\boldsymbol{\xi} (\boldsymbol{\theta_0}, \boldsymbol{x})   \\   \boldsymbol{\eta} (\boldsymbol{\theta_0}, \boldsymbol{x}) \end{array} \right)  \approx  n I(\boldsymbol{\theta_0}) (\boldsymbol{\hat{\theta}_n(x)} - \boldsymbol{\theta_0}). 
\end{equation}  
Re-arranging  
and \eqref{t1exp0} yields
\bea \label{t1exp20} 
2 \log T_1
&\approx&\left( \begin{array}{c  }\boldsymbol{\xi}  \\   \boldsymbol{\eta} \end{array} \right)  ^{\intercal} [I(\boldsymbol{\theta_0})]^{-1}
\left( \begin{array}{c  }\boldsymbol{\xi}   \\   \boldsymbol{\eta}\end{array} \right)  
  .
\ena
Similarly to \eqref{t1exp20} we obtain an expression for $2 \log T_2$. Because $\boldsymbol{\xi}=\boldsymbol{0}$ under $H_0$, then only the lower right corner, denoted by $C$, of the expected Fisher information matrix plays a role, so that under $H_0$, 
\bea \label{t2exp0} 
2 \log T_2
&\approx&  \boldsymbol{\eta}^{\intercal} C^{-1} \boldsymbol{\eta} .
\ena
 For the likelihood ratio statistic, \eqref{t1exp20} and \eqref{t2exp0} give that under $H_0$, 
\bea  \label{approxsymm}
- 2 \log \Lambda \approx 
\left( \begin{array}{c  }\boldsymbol{\xi}  \\   \boldsymbol{\eta} \end{array} \right)  ^{\intercal} [I(\boldsymbol{\theta_0})]^{-1}
\left( \begin{array}{c  }\boldsymbol{\xi}   \\   \boldsymbol{\eta}\end{array} \right)  
- \boldsymbol{\eta}^{\intercal} C^{-1} \boldsymbol{\eta}.
\ena  
This difference 
is an even function in $(\boldsymbol{\xi}, \boldsymbol{\eta})$. Hence, chisquare approximation results from \cite{Gaunt_Reinert} for symmetric test functions apply to $(\boldsymbol{\xi}, \boldsymbol{\eta})$. These results lead to an overall bound to the chisquare distribution with $r$ degrees of freedom. 

For any such generalised likelihood ratio test there are only finitely many observations available, and the quality of the approximation will depend on the number of observations, and also on the distribution of the observations under the null hypothesis. To date, bounds on the distance to the chisquare distribution are only available in special cases. This paper addresses the problem through the use of Stein's method. The key ingredients are \cite{anastasiou2015assessing}, where the distance to normality for maximum likelihood estimators is bounded using Stein's method for multivariate normal approximation, and \cite{Gauntetal}, where Stein's method for chisquare approximation is developed. We shall apply these results in order to obtain our main theorem, Theorem \ref{Theorem_i.n.i.d}. This theorem gives an explicit bound on the distance between the log-likelihood ratio statistic and the corresponding chisquare distribution. 

In some instances the bounds will not be small. Indeed likelihood ratio tests are not generally asymptotically chisquare distributed when the number of parameters is not negligible compared to the number of observations, see for example \cite{sur2017likelihood}, and hence the bounds should not always be small. 

Our results are the first ones which give an explicit bound to the chisquare distribution in Wilks' theorem under a general setting.  Our bounds in this paper are not optimised with respect to the constants. Their importance is of theoretical nature, but they can also be viewed as indicative of situations when the chisquare approximation does not hold. To illustrate this point, if $d$ is the dimension of the parameter space and $n$ the number of observations, then our bounds are of order $d^{7}/\sqrt{n}$. Hence the dimension of the parameter space is allowed to increase with $n$, but only very slowly. In particular in  the regime considered in \cite{sur2017likelihood}, that $d$ grows linearly with $n$, our bounds will tend to infinity with increasing sample size, as they should in this case. 

In \cite{Gauntetal}, for the Pearson chisquare statistic with fixed number of cells, a bound to the chisquare distribution of order $n^{-1}$ is obtained, through making use of the quadratic form of the chisquare statistic. In contrast, Theorem \ref{Theorem_i.n.i.d} gives a bound of the order $n^{-1/2}$ for fixed $d$, with no clear possibility of improving the bounds. While the proof of Theorem \ref{Theorem_i.n.i.d}  approximates the log-likelihood ratio by a quadratic form, this approximation is bounded by a quantity of order $n^{-1/2}$ . It has been suggested in the past that Pearson's chisquare statistic is closer to a chisquare distribution than the corresponding log-likelihood ratio statistic, see for  example the chapter on historical perspective in \cite{read2012goodness}. It is probable that including a Bartlett correction in the log-likelihood ratio statistic as in \cite{wit2012borrowing} will improve its asymptotic performance, see Chapter 6.11 in  \cite{cox2006principles}. In future work it will be interesting to explore the discrepancy between the two tests further. 

In this paper the observations $\bf{X}$ are assumed to be independent and identically distributed. As our proof is mainly based on Stein's method, generalisations to weakly dependent observations are straightforward in principle, see for example \cite{chen2010normal} and references therein. The results from \cite{Gaunt_Reinert} which we use are established for independent vectors of possibly dependent observations. For simplicity of exposition this paper concentrates on the classical i.i.d. case.

The conditions in our paper are such that the log-likelihood is locally linear, and hence resembles a quantitative approach to locally asymptotically normal models  in the sense of Le Cam \cite{le1960locally}. In contrast to Le Cam's general theory, instead of considering any small perturbation around the true parameter we restrict attention to the maximum-likelihood estimator. This restriction allows to apply results from \cite{Anastasiou_Reinert}. Expanding the results to provide a quantitative framework for Le Cam's theory will be part of future work. 

The paper is structured as follows. Section \ref{multi} gives the general result. The proof is presented in modular form as a collection of propositions, lemmas and corollaries, because the different steps in the approximation may be of independent interest. In particular, the right results could be adapted to provide the distance to chisquare for the score statistic. Section \ref{sec:examples} illustrates the result in three examples. Firstly we consider an example with a one-dimensional parameter, namely the exponential distribution. The second example is that of the normal distribution with two-dimensional parameter $(\mu, \sigma^2)$. The last example is logistic regression. 


\section{The general result} \label{multi}

We now make the argument from the Introduction precise. Let $\boldsymbol{X} = ( \boldsymbol{X_1}, \boldsymbol{X_2, }\ldots, \boldsymbol{X_n})$ be i.i.d. observations from a distribution with probability density function $f(\boldsymbol{x}| \boldsymbol{\theta})$, where $\boldsymbol{\theta} \in \Theta \subset \RR^d$.  Let $dim(\Theta) = d$ so that  $\Theta_0=\{ \boldsymbol{\theta_0} \in \Theta: \theta_{0,j} = 0 \mbox{ for } j=1, \ldots, r\}$ has dimension $d-r$. The test problem is $H_0: \boldsymbol{\theta_0},_{[1:r]} =0$ against the general alternative. 
Expectations are taken under the true parameter $\boldsymbol{\theta_0}$. 

\subsection{The regularity assumptions} \label{regularity}

In this section we specify the regularity assumptions in the multidimensional case. The assumptions are made to ensure that the maximum likelihood estimator (MLE) for $\boldsymbol{\theta}$ exists, is unique, asymptotically consistent and asymptotically normal. There are different sets of such assumptions available; here we shall use the set-up  for asymptotic normality as in Section 4.4.2 of \cite{Davison}. The subscript $\boldsymbol{\theta}$ in $\EE_{\boldsymbol{\theta}}$ signifies that the expectation is taken under $f(\boldsymbol{x}|\boldsymbol{\theta})$. From now on, we write 
\bea \label{Fisher} I(\boldsymbol{\theta_0}) =  \left( \begin{array}{c c } A & B \\B^{\intercal} & C   \end{array} \right),
\ena  
where for any $r \in \left\lbrace 1,2,\ldots, d\right\rbrace$, $A = A^\intercal \in \mathbb{R}^{r\times r}$, $B \in \mathbb{R}^{r\times(d-r)}$, and $C = C^\intercal  \in \mathbb{R}^{(d-r)\times(d-r)}$. Following \cite{Davison}, we make the following assumptions:
\begin{itemize}[leftmargin=0.55in]
\item[(R.C.1)] The densities defined by any two different values of $\boldsymbol{\theta}$ are distinct;
\item[(R.C.2)] $\ell(\boldsymbol{\theta};\boldsymbol{x})$ is three times differentiable with respect to the unknown vector parameter, $\boldsymbol{\theta}$, and the third partial derivatives are continuous in $\boldsymbol{\theta}$;
\item[(R.C.3)] for any $\boldsymbol{\theta_0} \in \Theta$ and for $\boldsymbol{\mathbb{X}}$ denoting the support of the data, there exists $\epsilon(\boldsymbol{\theta_0}) > 0$ and functions $M_{rst}(\boldsymbol{x})$ (they can depend on $\boldsymbol{\theta_0}$), such that for $\boldsymbol{\theta} = (\theta_1, \theta_2, \ldots, \theta_d)$ and $r, s, t, j = 1,2,\ldots,d,$
\begin{equation}
\nonumber \left|\frac{\partial^3}{\partial \theta_r \partial \theta_s \partial \theta_t}\ell(\boldsymbol{\theta};\boldsymbol{x})\right| \leq M_{rst}(\boldsymbol{x}), \; \forall \boldsymbol{x} \in \boldsymbol{\mathbb{X}},\; \left|\theta_j - \theta_{0,j}\right| < \epsilon(\boldsymbol{\theta_0}),
\end{equation}
with $\EE[M_{rst}(\boldsymbol{X})] < \infty$;
\item[(R.C.4)] for all $\boldsymbol{\theta} \in \Theta$, $\EE_{\boldsymbol{\theta}}[\ell_{\boldsymbol{X_i}}(\boldsymbol{\theta})] = 0$;
\item[(R.C.5)] the expected Fisher information matrix for one random vector, $I(\boldsymbol{\theta})$ is finite, symmetric and positive definite. For $r,s=1,2,\ldots,d$, its elements satisfy
\begin{equation*}
\nonumber n[I(\boldsymbol{\theta})]_{rs} = \EE_{\boldsymbol{\theta}}\left\lbrace \frac{\partial}{\partial \theta_r} \ell(\boldsymbol{\theta};\boldsymbol{X})\frac{\partial}{\partial \theta_s}\ell(\boldsymbol{\theta};\boldsymbol{X})\right\rbrace = -\EE_{\boldsymbol{\theta}}\left\lbrace \frac{\partial^2}{\partial \theta_r \partial \theta_s} \ell(\boldsymbol{\theta};\boldsymbol{X}) \right\rbrace.
\end{equation*}
This condition implies that $nI(\boldsymbol{\theta})$ is the covariance matrix of $\nabla(\ell(\boldsymbol{\theta};\boldsymbol{x}))$.
We also assume that the submatrices in \eqref{Fisher} satisfy that $C$ is invertible and that $A-BC^{-1}B^T$ is positive definite. 
\end{itemize}

Under (R.C.1)-(R.C.5), \cite{Davison} proves that 
\begin{equation}
\label{Hoadley_normality}
\sqrt{n}\left(\boldsymbol{\hat{\theta}_n(X)} - \boldsymbol{\theta_0}\right) \xrightarrow[{n \to \infty}]{{\rm d}} \left[I(\boldsymbol{\theta_0})\right]^{-\frac{1}{2}}\boldsymbol{Z},
\end{equation}
where for $d$ fixed, $I_{d \times d}$ is the $d \times d$ identity matrix and $\boldsymbol{Z}\sim {\rm N}_d(\boldsymbol{0},I_{d \times d})$. In addition, $\xrightarrow[]{{\rm d}}$ denotes convergence in distribution. Under (R.C.1)-(R.C.5), \cite{anastasiou2015assessing} gives an explicit upper bound to the distance of the distribution of the MLE to the normal for i.i.d. random vectors; under stronger conditions bounds are also given for the case of non-identically distributed random elements. 

Some notation follows. Let the subscript $(m)\in \{ 1, 2, \ldots, d\} $ denote an index for which $\left|\hat{\theta}_n(\boldsymbol{x})_{(m)} - \theta_{0,(m)}\right|$ is the largest among the $d$ components;
\begin{align}
\nonumber & (m) \in \left\lbrace 1,\ldots,d\right\rbrace\;{\rm such\;that\;} \left|\hat{\theta}_n(\boldsymbol{x})_{(m)} - \theta_{0,(m)}\right| = \max_{j = 1,\ldots, d} \left|\hat{\theta}_n(\boldsymbol{x})_j - \theta_{0,j}\right|
\end{align}
and similarly $(m^*) \in \left\lbrace 1,2,\ldots, d-r \right\rbrace $ is defined with $\boldsymbol{\theta_0}$ replaced by $\boldsymbol{\theta^*} = \left(\theta_{*,1}, \ldots, \theta_{*,d-r}\right)$.
For our main result, we assume: 
\begin{itemize}
\item[(O1)] For any $\boldsymbol{\theta_0} \in \Theta$ there exists $\epsilon(\boldsymbol{\theta_0})>0$ and functions $M^*_{k^*j^*l^*}(\boldsymbol{x}), \forall k^*,j^*,l^* \in \left\lbrace 1,2,\ldots,d-r \right\rbrace$ such that for all $0 < \epsilon \le \epsilon(\boldsymbol{\theta_0})$ it holds that $\left|\frac{\partial^3}{\partial \theta_{k^*+r}\partial \theta_{j^*+r}\partial \theta_{l^*+r}}\ell(\boldsymbol{\theta^*},\boldsymbol{x})\right| \leq M^*_{k^*j^*l^*}(\boldsymbol{x})$ for all $\boldsymbol{\theta^*}$ such that $|\theta^*_j - \theta_{*,j}| < \epsilon$ $\forall j \in \left\lbrace 1,2,\ldots,d-r\right\rbrace$. 
\item [(O2)] With $M_{kjv}(\boldsymbol{x})$ as in (R.C.3) and $M^*_{k^*j^*l^*}(\boldsymbol{x})$ as in (O1), then for every $0 < \epsilon \le  \epsilon(\boldsymbol{\theta_0})$ and for $\kappa =2,4$,
\begin{align}
\nonumber & \mathbb{E}\left(\left(M_{kjv}(\boldsymbol{X})\right)^\kappa\middle|\left|\hat{\theta}_n(\boldsymbol{x})_{(m)} - \theta_{0,(m)} \right| < \epsilon \right) < \infty\\
\nonumber & \mathbb{E}\left(\left(M^*_{k^*j^*v^*}(\boldsymbol{X})\right)^\kappa\middle|\left|\hat{\theta}_{*}(\boldsymbol{x})_{(m^*)} - \theta_{*,(m^*)}\right| < \epsilon \right) < \infty.
\end{align}
\item  [(O3)] The random variables 
$Y_{i,j}(\boldsymbol{\theta}) = \frac{\partial}{\partial \theta_i} \log f(\boldsymbol{X_j}| \boldsymbol{\theta})$
have finite absolute moments up to fifth order. 
\end{itemize} 
Condition (O1) can be read as the version of (R.C.3), under the null hypothesis, where $\theta_{0,j} = 0$ for $j=1,2,\ldots,r$. Our main result is as follows.
\begin{theorem}
\label{Theorem_i.n.i.d}
Let $\boldsymbol{X_1}, \boldsymbol{X_2}, \ldots, \boldsymbol{X_n}$ be i.i.d. $\mathbb{R}^t$-valued, $t \in \mathbb{Z}^+$, random vectors with probability density (or mass) function $f(\boldsymbol{x_1}|\boldsymbol{\theta})$, for which the parameter space $\Theta$ is an open subset of $\mathbb{R}^d$. Assume that the MLE exists and is unique and that (R.C.1)-(R.C.5) as well as (O1), (O2) and (O3) are satisfied. Then for $- 2\log \Lambda$ as in \eqref{loglikelihoodsplitT1T2}, $h \in \C_b^2(\mathbb{R})$ and $K \sim \chi^2_r$, it holds that
\begin{align}
\label{final_bound_regression}
\nonumber \left|\mathbb{E}\left[h\left(-2\log \Lambda\right)\right]- \mathbb{E}[h(K)]\right| & \leq  2  \frac{(||h' || + || h''||)}{\sqrt{n}} R\left(  \left(\boldsymbol{\xi},\boldsymbol{\eta}\right), (A - B C^{-1} B^T)^{-1}, B C^{-1} \right)\\
& + \frac{1}{\sqrt{n}}\left(\|h'\|(K_1(\boldsymbol{\theta_0}) + K^*_1(\boldsymbol{\theta_0})) + K_2(\boldsymbol{\theta_0}) + K^*_2(\boldsymbol{\theta_0})\right),
\end{align}
where $ R\left(  \left(\boldsymbol{\xi},\boldsymbol{\eta}\right); \cdot, \cdot \right)$ will be defined in Equation  \eqref{RWLD} in Proposition \ref{gauntreinertcor}.
  In addition, for $0 < \epsilon \le  \epsilon( \boldsymbol{\theta_0})$, 
\begin{align}
\label{K1} & K_1(\boldsymbol{\theta_0}) = 3n\sum_{j=1}^d\sum_{k=1}^d\left[\mathbb{E}\left(Q_j^2Q_k^2\right)\right]^{\frac{1}{2}}\left[{\rm Var}\left(\frac{\partial^2}{\partial\theta_j\partial\theta_k}\log f(\boldsymbol{X_1}|\boldsymbol{\theta_0})\right)\right]^{\frac{1}{2}}\\
\nonumber
& +\sum_{l=1}^{d}\sum_{m=1}^{d}\left[\left[I(\boldsymbol{\theta_0})\right]^{-1}\right]_{lm}\sum_{j=1}^{d}\sum_{k=1}^{d}\sqrt{{\rm Var}\left(\frac{\partial^2}{\partial\theta_l\partial\theta_j}\log f(X_1|\boldsymbol{\theta_0})\right)}\left[\mathbb{E}\left(Q_j^6\right)\mathbb{E}\left(Q_k^6\right)\mathbb{E}\left(T_{mk}^6\right)\right]^{\frac{1}{6}}\end{align}
and
\begin{align}
\label{K2}& K_2(\boldsymbol{\theta_0}) = 2\sqrt{n}\frac{\|h\|}{\epsilon^2}\mathbb{E}\left(\sum_{j=1}^{d}Q_j^2\right)\\
\nonumber & + \sqrt{n}\|h'\|\frac{7}{3}\sum_{j=1}^d\sum_{k=1}^d\sum_{l=1}^d\left[\mathbb{E}\left(Q_j^2Q_k^2Q_l^2\right)\right]^{\frac{1}{2}}\left[\mathbb{E}\left[\left(M_{jkl}(\boldsymbol{X})\right)^2\middle|\left|Q_{(m)}\right|<\epsilon\right]\right]^{\frac{1}{2}}\\
\nonumber & + \frac{\|h'\|}{\sqrt{n}}\sum_{q=1}^{d}\sum_{k=1}^{d}\left|\left[\left[I(\boldsymbol{\theta_0})\right]^{-1}\right]_{kq}\right|\sum_{j=1}^{d}\sum_{l=1}^{d}\sum_{s=1}^{d}\sqrt{\mathbb{E}\left(Q_j^2Q_l^2Q_s^2\right)}\left[\mathbb{E}\left(T_{kj}^4\middle|\left|Q_{(m)}\right|<\epsilon\right)\right]^{\frac{1}{4}}\\
\nonumber &\qquad\qquad\times\left[\mathbb{E}\left(\left(M_{qsl}(\boldsymbol{X})\right)^4\middle|\left|Q_{(m)}\right|<\epsilon\right)\right]^{\frac{1}{4}}\\ 
\nonumber & + \frac{\|h'\|}{4\sqrt{n}}\sum_{b=1}^{d}\sum_{k=1}^{d}\sum_{s=1}^{d}\sum_{q=1}^{d}\sum_{l=1}^{d}\sum_{j=1}^{d}\left|\left[\left[I(\boldsymbol{\theta_0})\right]^{-1}\right]_{qb}\right|\sqrt{\mathbb{E}\left(Q_k^2Q_s^2Q_j^2Q_l^2\right)}\\ \nonumber 
& \qquad\quad \times \left[\mathbb{E}\left(\left(M_{bsk}(\boldsymbol{X})\right)^4\middle|\left|Q_{(m)}\right|<\epsilon\right)\right]^{\frac{1}{4}}\left[\mathbb{E}\left(\left(M_{qjl}(\boldsymbol{X})\right)^4\middle|\left|Q_{(m)}\right|<\epsilon\right)\right]^{\frac{1}{4}}.
\end{align}
and $K_1^*(\boldsymbol{\theta_0}), K_2^*(\boldsymbol{\theta_0})$ are the versions of $K_1(\boldsymbol{\theta_0})$ and $K_2(\boldsymbol{\theta_0})$, respectively,  under the null hypothesis. Here  
\begin{equation}
\begin{aligned}
\label{cm}
& Q_{j}  = Q_j(\boldsymbol{X},\boldsymbol{\theta_0}) := \hat{\theta}_n(\boldsymbol{X})_{j} - \theta_{0,j}, \forall j =1,2,\ldots,d\\
& Q^*_{j} = Q^{*}_j(\boldsymbol{X},\boldsymbol{\theta_*}) := \hat{\theta}^*(\boldsymbol{X})_{j} - \theta^*_{j}, \forall j =1,2,\ldots,d-r,\\
& T_{lj} = T_{lj}\left(\boldsymbol{\theta_0},\boldsymbol{X}\right) = \frac{\partial^2}{\partial\theta_l\partial\theta_j}\ell(\boldsymbol{\theta_0};\boldsymbol{X}) + n[I(\boldsymbol{\theta_0})]_{lj}, \quad j,l \in \left\lbrace 1,2,\ldots,d \right\rbrace,\\
& T^*_{lj} = T^*_{lj}\left(\boldsymbol{\theta_0},\boldsymbol{X}\right) = \frac{\partial^2}{\partial\theta_{l+r}\partial\theta_{j+r}}\ell(\boldsymbol{\theta_0};\boldsymbol{X}) + nC_{lj}, \quad j,l \in \left\lbrace 1,2,\ldots,d-r \right\rbrace.
\end{aligned}
\end{equation}
\end{theorem}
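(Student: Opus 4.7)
The plan is to use the triangle inequality
\begin{equation*}
|\mathbb{E} h(-2\log\Lambda)-\mathbb{E} h(K)|\leq |\mathbb{E} h(-2\log\Lambda)-\mathbb{E} h(W)| + |\mathbb{E} h(W)-\mathbb{E} h(K)|,
\end{equation*}
where $W$ denotes the symmetric quadratic form on the right-hand side of \eqref{approxsymm}. The first difference will account for the Taylor-expansion error in replacing the log-likelihood ratio by a quadratic form in the score, and will yield the $K_1,K_2,K^*_1,K^*_2$ contributions in \eqref{final_bound_regression}. The second difference will account for the chisquare approximation of $W$ itself, and, by Proposition \ref{gauntreinertcor} together with the symmetry of $W$ in $(\boldsymbol{\xi},\boldsymbol{\eta})$, will yield the $R(\cdot,\cdot,\cdot)$ term.

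For the Taylor step I would write $-2\log\Lambda = 2\log T_1 - 2\log T_2$ and treat the two pieces in parallel. For $2\log T_1$, I Taylor-expand $\ell(\boldsymbol{\theta_0};\boldsymbol{x})$ around $\hat{\boldsymbol{\theta}}_n(\boldsymbol{x})$ to third order; since $\nabla\ell(\hat{\boldsymbol{\theta}}_n)=0$ the linear term vanishes, and the quadratic term is $\tfrac12 \boldsymbol{Q}^\intercal\nabla^2\ell(\hat{\boldsymbol{\theta}}_n)\boldsymbol{Q}$ with $\boldsymbol{Q}$ as in \eqref{cm}. Next I replace $\nabla^2\ell(\hat{\boldsymbol{\theta}}_n)$ by $\nabla^2\ell(\boldsymbol{\theta_0})$, absorbing the error into an additional third-derivative term, and then replace the latter by $-nI(\boldsymbol{\theta_0})$, absorbing the error into the residuals $T_{lj}$ of \eqref{cm}. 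Finally, a first-order Taylor expansion of the identity $\nabla\ell(\hat{\boldsymbol{\theta}}_n)=0$ about $\boldsymbol{\theta_0}$ gives $\boldsymbol{Q}\approx (nI(\boldsymbol{\theta_0}))^{-1}S(\boldsymbol{\theta_0})$ up to a further remainder involving $T_{lj}$ and third derivatives, which I substitute in. The same analysis applied to $2\log T_2$ with $\hat{\boldsymbol{\theta}}^*$ in place of $\hat{\boldsymbol{\theta}}_n$ produces the starred analogues $T^*_{lj},Q^*_j,K^*_1,K^*_2$. All residuals are controlled by repeated Cauchy--Schwarz and H\"older estimates using (O3) for moments of the score and (O2) for conditional moments of $M_{rst}$ on $\{|Q_{(m)}|<\epsilon\}$; the complementary event contributes the term $2\sqrt n\,\|h\|\epsilon^{-2}\mathbb{E}(\sum_j Q_j^2)$ in $K_2$, via $|\mathbb{E}\{h(\cdot)\mathbf{1}_A\}|\le\|h\|\,\mathbb{P}(A)$ and Chebyshev applied to $\|\boldsymbol{Q}\|^2$.

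For the chisquare step, the representation \eqref{approxsymm} already shows that $W$ is an even function of $(\boldsymbol{\xi},\boldsymbol{\eta})$, and the block decomposition \eqref{Fisher} combined with the Schur-complement identity (valid under the invertibility assumptions in (R.C.5)) rewrites $W$ as a quadratic form in $\boldsymbol{\xi}-BC^{-1}\boldsymbol{\eta}$ with weight $(A-BC^{-1}B^\intercal)^{-1}$, matching exactly the arguments of $R$ in Proposition \ref{gauntreinertcor}. Applied to the i.i.d.\ sum $(\boldsymbol{\xi},\boldsymbol{\eta})=n^{-1/2}\sum_j\boldsymbol{Y}_j$ via the symmetric-test-function chisquare approximation of \cite{Gaunt_Reinert} and \cite{Gauntetal}, this yields the term $2(\|h'\|+\|h''\|)n^{-1/2}R(\cdot,\cdot,\cdot)$. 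Combining this bound with the Taylor bound through the triangle inequality and distributing the $1/\sqrt n$ factors produces exactly \eqref{final_bound_regression}.

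The main obstacle is the Taylor step: third derivatives of $\ell$ must be controlled on the random event $\{|Q_{(m)}|<\epsilon\}$ rather than pointwise, which is why the conditional moment assumption (O2) appears in precisely the form given, and why the extra Chebyshev correction with $\|h\|/\epsilon^2$ is needed as the first summand of $K_2$. Careful book-keeping of the $\sqrt n$ scalings (score at scale $\sqrt n$, $\boldsymbol{Q}$ at scale $n^{-1/2}$, Fisher information at scale $1$) is required throughout so that the final bound is of order $n^{-1/2}$ for fixed $d$; matching each double, triple and quadruple sum in $K_1$ and $K_2$ to the terms produced by the expansion is then essentially mechanical once this book-keeping is fixed.
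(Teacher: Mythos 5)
Your proposal follows essentially the same route as the paper: a third-order Taylor expansion of $\ell(\boldsymbol{\theta_0};\boldsymbol{x})$ about $\boldsymbol{\hat{\theta}_n(x)}$ combined with the score-function expansion $\boldsymbol{Q}\approx (nI(\boldsymbol{\theta_0}))^{-1}S(\boldsymbol{\theta_0})$ to reduce $-2\log\Lambda$ to the quadratic form $g(\boldsymbol{\xi},\boldsymbol{\eta})$ plus remainders (the paper's Propositions \ref{t1} and \ref{scorefct}), remainders controlled by Cauchy--Schwarz/H\"older and by conditioning on $\{|Q_{(m)}|<\epsilon\}$ with the $\|h\|\epsilon^{-2}$ Markov correction, and the chisquare step handled exactly as in the paper via the Schur-complement form \eqref{lem:simple} and Proposition \ref{gauntreinertcor}. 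The only cosmetic difference is that the paper splits the remainder into $R_{A}$- and $R_{B}$-type terms and applies the triangle inequality in three stages rather than two, which is a bookkeeping choice rather than a different argument.
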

\vspace{0.05in}
\begin{remark}
\label{remark_multi_order}
\textbf{(1)} At first glance, the bound seems complicated. However, the examples that follow show that the terms are easily calculated.\\
\textbf{(2)} For $Q_j$ as in \eqref{cm}, we have that $\mathbb{E}\left(Q_j^2\right) = \mathcal{O}\left(\frac{1}{n}\right)$. To see this better, use that from the asymptotic normality of the MLE in \eqref{Hoadley_normality}, $\sqrt{n}\mathbb{E}\left(\boldsymbol{\hat{\theta}_n(X)} - \boldsymbol{\theta_0}\right) \xrightarrow[{n \to \infty}]{{}} \boldsymbol{0}$ and therefore
\begin{equation}
\nonumber \mathbb{E}\left(Q_j\right) = o\left(\frac{1}{\sqrt{n}}\right),\; \forall j \in \left\lbrace 1,2, \ldots, d\right\rbrace.
\end{equation}
In addition, ${\rm Cov}\left(\sqrt{n}\left[I(\boldsymbol{\theta_0})\right]^{\frac{1}{2}}\left(\boldsymbol{\hat{\theta}_n(X)} - \boldsymbol{\theta_0}\right)\right) \xrightarrow[{n \to \infty}]{{}} I_{d\times d}$. Therefore, since $I(\boldsymbol{\theta_0})$ is a symmetric matrix, we have that
\begin{equation}
\label{ordervariancei.n.i.d}
n\left[I(\boldsymbol{\theta_0})\right]^{\frac{1}{2}}{\rm Cov}\left(\boldsymbol{\hat{\theta}_n(X)}\right)\left[I(\boldsymbol{\theta_0})\right]^{\frac{1}{2}} \xrightarrow[{n \to \infty}]{{}} I_{d \times d}.
\end{equation}
It follows from \eqref{ordervariancei.n.i.d} that $${\rm Var}\left(\hat{\theta}_n(\boldsymbol{X})_j\right) = \mathcal{O}\left(\frac{1}{n}\right),\;\forall j \in \left\lbrace 1,2,\ldots,d \right\rbrace.$$
Combining these results,
\begin{equation}
\label{orderMSEi.n.i.d}
\mathbb{E}\left(Q_j^2\right) = {\rm Var}\left(\hat{\theta}_n(\boldsymbol{X})_j\right) + \left[{\rm E}\left(Q_j\right)\right]^2 = \mathcal{O}\left(\frac{1}{n}\right).
\end{equation}
\textbf{(3)} With $T_{lj}$ as in \eqref{cm}, using (R.C.5) and the fact that $\boldsymbol{X_1}, \boldsymbol{X_2}, \ldots, \boldsymbol{X_n}$ are i.i.d. yields
\begin{align}
\label{orderTlj}
\nonumber \mathbb{E}\left(T_{lj}^2\right) & = \mathbb{E}\left(\frac{\partial^2}{\partial\theta_l\partial\theta_j}\ell(\boldsymbol{\theta_0};\boldsymbol{X}) + n[I(\boldsymbol{\theta_0})]_{lj}\right)^2 = {\rm Var}\left(\frac{\partial^2}{\partial\theta_l\partial\theta_j}\ell(\boldsymbol{\theta_0};\boldsymbol{X})\right)\\
& = n{\rm Var}\left(\frac{\partial^2}{\partial\theta_l\partial\theta_j}\log\left(f(\boldsymbol{X_1}|\boldsymbol{\theta_0})\right)\right),
\end{align}
showing that $\mathbb{E}\left(T_{lj}^2\right)$ is $\mathcal{O}(n)$.\\
\textbf{(4)} For fixed $d$, the upper bound we give in \eqref{final_bound_regression} is $\mathcal{O}\left(n^{-1/2}\right)$. The expression for  \linebreak $R\left((\boldsymbol{\xi}, \boldsymbol{\eta})), (A - B C^{-1} B^T)^{-1}, B C^{-1}\right)$ given in Proposition \ref{gauntreinertcor}, is $\mathcal{O}(1)$. In addition, using \eqref{orderMSEi.n.i.d} and \eqref{orderTlj} it can be deduced that
\begin{equation}
\nonumber K_1(\boldsymbol{\theta_0}) = \mathcal{O}(1),\;\;
K_1^*(\boldsymbol{\theta_0}) = \mathcal{O}(1),\;\;
K_2(\boldsymbol{\theta_0}) = \mathcal{O}(1),\;\; K_2^*(\boldsymbol{\theta_0}) = \mathcal{O}(1).
\end{equation}
Hence, the upper bound in Theorem \ref{Theorem_i.n.i.d} is $\mathcal{O}\left(n^{-1/2}\right)$.\\
\textbf{(5)} If the dimensionality of the parameter is not fixed but if the entries of $I(\boldsymbol{\theta_0})$ are of order 1, then  $c$ in \eqref{RWLD} is of order $r^2(d-r)$.  
Therefore, we obtain that the first term of the bound is $\mathcal{O}\left(r^2 (d-r) d^4 n^{-1/2}\right)$. Using \eqref{orderMSEi.n.i.d} and \eqref{orderTlj}, we have that the second and third terms (related to $K_1(\boldsymbol{\theta_0})$ and $K_1^*(\boldsymbol{\theta_0})$) of the bound are of order $d^2n^{-1/2}$, while the fourth and fifth terms (related to $K_2(\boldsymbol{\theta_0})$ and $K_2^*(\boldsymbol{\theta_0})$) are both $\mathcal{O}\left(d^3 n^{-1/2}\right)$. Hence, the overall order of the bound in the chisquare approximation for the likelihood ratio test is $d^2 \max( d,  r^2 d^2(d-r) )n^{-1/2}$ which is at most of order $d^7 n^{-1/2}$. The chisquare approximation is justified when $ d = o(n^{1/14})$.
\\
\textbf{(6)} Due to the smoothness assumptions in this paper, the bound in  Theorem \ref{Theorem_i.n.i.d} is not given in a standard probability distance. Instead it could be re-phrased in terms of the integral probability metric 
$$ d (\mu, \nu) = \sup_{h \in \C_b^2(\mathbb{R}): || h'|| \le 1, || h''|| \le 1} | \EE h(X) - \EE h(Y)|$$
where $X \sim \mu$ and $Y \sim \nu$. For more details on such metrics see for example \cite{zolotarev1983probability} and \cite{gibbs2002choosing}. 
 \end{remark}

\subsection{Proof of Theorem \ref{Theorem_i.n.i.d} } 

The proof of Theorem \ref{Theorem_i.n.i.d}  follows the outline from the introduction by bounding the error in each approximation.
The log-likelihood ratio statistic can be expressed as in \eqref{loglikelihoodsplitT1T2}. The expected Fisher information matrix is given in \eqref{Fisher}; 
with $C^{-1}$ assumed to exist. From now on, we will be using the notation introduced in \eqref{cm}. 

The different steps are disentangled into results which hold for every realisation $\boldsymbol{x}$ and results which hold when taking expectations over test functions. 

\subsubsection{Quantifying  the approximation for $2 \log T_1$ and for $2 \log T_2$}

The first step in the proof is to quantify the approximation \eqref{t1exp0} for $2 \log T_1.$ 

\begin{proposition}\label{t1} 
Under the assumptions of Theorem \ref{Theorem_i.n.i.d}, 
\begin{equation}
\nonumber 2\log T_1 = n(\boldsymbol{\hat{\theta}_n(X)} - \boldsymbol{\theta_0})^{\intercal}I(\boldsymbol{\theta_0})(\boldsymbol{\hat{\theta}_n(X)} - \boldsymbol{\theta_0}) + R_1 
+R_2 ,
\end{equation}
where using the notation in \eqref{cm},
\begin{equation}
\nonumber R_1 = R_1(\boldsymbol{X},\boldsymbol{\theta_0}) = - \sum_{j=1}^d\sum_{k=1}^dQ_jQ_kT_{kj} 
\end{equation}
and
\begin{equation}
\nonumber R_2 =  R_2(\boldsymbol{X},\boldsymbol{\theta_0}) = \sum_{j=1}^d\sum_{k=1}^d\sum_{s=1}^dQ_jQ_kQ_s\frac{\partial^3}{\partial\theta_j\partial\theta_k\partial\theta_s}\left(\frac{1}{3}\ell\left(\boldsymbol{\tilde{\theta}};\boldsymbol{X}\right) - \ell\left(\boldsymbol{\tilde{\tilde{\theta}}};\boldsymbol{X}\right)\right)
\end{equation} 
for some $\boldsymbol{\tilde{\theta}}, \boldsymbol{\tilde{\tilde{\theta}}}$ between $\boldsymbol{\hat{\theta}_n(X)}$ and $\boldsymbol{\theta_0}$.
\end{proposition}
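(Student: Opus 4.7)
The plan is to start from the identity $2\log T_1 = 2\bigl[\ell(\boldsymbol{\hat{\theta}_n(X)};\boldsymbol{X}) - \ell(\boldsymbol{\theta_0};\boldsymbol{X})\bigr]$ and Taylor-expand $\ell(\boldsymbol{\theta_0};\boldsymbol{X})$ around $\boldsymbol{\hat{\theta}_n(X)}$ to third order in Lagrange form, using (R.C.2) to justify existence of the expansion along the segment between $\boldsymbol{\theta_0}$ and $\boldsymbol{\hat{\theta}_n(X)}$. Writing $\boldsymbol{\theta_0}-\boldsymbol{\hat{\theta}_n(X)} = -\boldsymbol{Q}$ and using the first-order MLE condition $\partial_j\ell(\boldsymbol{\hat{\theta}_n(X)};\boldsymbol{X}) = 0$, the first-order term vanishes and one obtains
\begin{equation*}
2\log T_1 = -\sum_{j=1}^{d}\sum_{k=1}^{d} Q_j Q_k \,\frac{\partial^2}{\partial\theta_j\partial\theta_k}\ell(\boldsymbol{\hat{\theta}_n(X)};\boldsymbol{X}) + \frac{1}{3}\sum_{j=1}^{d}\sum_{k=1}^{d}\sum_{s=1}^{d} Q_j Q_k Q_s \,\frac{\partial^3}{\partial\theta_j\partial\theta_k\partial\theta_s}\ell(\boldsymbol{\tilde{\theta}};\boldsymbol{X})
\end{equation*}
for some $\boldsymbol{\tilde{\theta}}$ on the segment from $\boldsymbol{\theta_0}$ to $\boldsymbol{\hat{\theta}_n(X)}$; the factor $1/3$ appears because $2\cdot\tfrac{1}{6}=\tfrac{1}{3}$ and the $(-1)^3$ sign cancels against the overall minus.

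Next I would replace the observed second derivative evaluated at the MLE by its counterpart evaluated at $\boldsymbol{\theta_0}$, again by a first-order Taylor expansion with Lagrange remainder:
\begin{equation*}
\frac{\partial^2}{\partial\theta_j\partial\theta_k}\ell(\boldsymbol{\hat{\theta}_n(X)};\boldsymbol{X}) = \frac{\partial^2}{\partial\theta_j\partial\theta_k}\ell(\boldsymbol{\theta_0};\boldsymbol{X}) + \sum_{s=1}^{d} Q_s \,\frac{\partial^3}{\partial\theta_j\partial\theta_k\partial\theta_s}\ell(\boldsymbol{\tilde{\tilde{\theta}}};\boldsymbol{X})
\end{equation*}
for some $\boldsymbol{\tilde{\tilde{\theta}}}$ between $\boldsymbol{\theta_0}$ and $\boldsymbol{\hat{\theta}_n(X)}$. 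Substituting the definition $T_{kj} = \partial_{jk}\ell(\boldsymbol{\theta_0};\boldsymbol{X}) + n[I(\boldsymbol{\theta_0})]_{kj}$ from \eqref{cm} converts the leading quadratic form into
\begin{equation*}
-\sum_{j,k}Q_j Q_k\,\frac{\partial^2}{\partial\theta_j\partial\theta_k}\ell(\boldsymbol{\theta_0};\boldsymbol{X}) = n\bigl(\boldsymbol{\hat{\theta}_n(X)} - \boldsymbol{\theta_0}\bigr)^{\intercal} I(\boldsymbol{\theta_0}) \bigl(\boldsymbol{\hat{\theta}_n(X)} - \boldsymbol{\theta_0}\bigr) + R_1,
\end{equation*}
exactly matching the claimed $R_1$. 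The third-order terms from the two Taylor expansions combine into a single triple sum whose coefficient is $\tfrac{1}{3}\partial_{jks}\ell(\boldsymbol{\tilde{\theta}};\boldsymbol{X}) - \partial_{jks}\ell(\boldsymbol{\tilde{\tilde{\theta}}};\boldsymbol{X})$, which is $R_2$ as stated.

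The argument is essentially bookkeeping once the right two Taylor expansions are chosen, so no single step is truly hard. The main thing to watch is signs and the factor of $1/3$ in the first expansion, and ensuring that the intermediate points $\boldsymbol{\tilde{\theta}},\boldsymbol{\tilde{\tilde{\theta}}}$ really do lie between $\boldsymbol{\theta_0}$ and $\boldsymbol{\hat{\theta}_n(X)}$ so that (R.C.2) guarantees continuity of the third partial derivatives along the segment, validating the Lagrange form. The statement is realisation-wise, so no probabilistic control is needed at this stage — that enters only when $R_1$ and $R_2$ are bounded in expectation in later steps via (R.C.3) and (O2).
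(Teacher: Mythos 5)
Your proof is correct and follows essentially the same route as the paper: a third-order Taylor expansion of $\ell(\boldsymbol{\theta_0};\boldsymbol{X})$ about $\boldsymbol{\hat{\theta}_n(X)}$ combined with the MLE stationarity condition, then a first-order expansion of the second derivatives back at $\boldsymbol{\theta_0}$ to extract $R_1$ via $T_{kj}$ and to merge the two third-order remainders into $R_2$. The sign and factor bookkeeping ($2\cdot\tfrac{1}{6}=\tfrac{1}{3}$) agrees with the paper's derivation.
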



\begin{proof} 
The regularity conditions and a third order Taylor expansion of $\ell(\boldsymbol{\theta_0};\boldsymbol{x})$ about $\boldsymbol{\hat{\theta}_n(x)}$ yield
\begin{align}
\nonumber & \ell(\boldsymbol{\theta_0};\boldsymbol{x}) = \ell\left(\boldsymbol{\hat{\theta}_n(x)};\boldsymbol{x}\right) - \sum_{j=1}^dQ_j\frac{\partial}{\partial\theta_j}\ell\left(\boldsymbol{\hat{\theta}_n(x)};\boldsymbol{x}\right)\\
\nonumber & + \frac{1}{2}\sum_{j=1}^d\sum_{k=1}^dQ_jQ_k\frac{\partial^2}{\partial\theta_k\partial\theta_j}\ell\left(\boldsymbol{\hat{\theta}_n(x)};\boldsymbol{x}\right) - \frac{1}{6}\sum_{j=1}^d\sum_{k=1}^d\sum_{s=1}^dQ_jQ_kQ_s\frac{\partial^3}{\partial\theta_j\partial\theta_k\partial\theta_s}\ell\left(\boldsymbol{\tilde{\theta}};\boldsymbol{x}\right),
\end{align}
where $\boldsymbol{\tilde{\theta}}$ is between $\boldsymbol{\theta_0}$ and $\boldsymbol{\hat{\theta}_n(x)}$. As $\frac{\partial}{\partial \theta_j}\ell(\boldsymbol{\hat{\theta}_n(x)};\boldsymbol{x}) = 0$, $\forall j \in \left\lbrace  1,2,\ldots,d \right\rbrace$,
\begin{align}
\nonumber & 2\log T_1 = - \sum_{j=1}^d\sum_{k=1}^dQ_jQ_k\frac{\partial^2}{\partial\theta_k\partial\theta_j}\ell\left(\boldsymbol{\hat{\theta}_n(X)};\boldsymbol{X}\right)\\
\nonumber & \qquad\quad\;\;\; + \frac{1}{3}\sum_{j=1}^d\sum_{k=1}^d\sum_{s=1}^dQ_jQ_kQ_s\frac{\partial^3}{\partial\theta_j\partial\theta_k\partial\theta_s}\ell\left(\boldsymbol{\tilde{\theta}};\boldsymbol{X}\right)\\
\nonumber & = n\sum_{j=1}^d\sum_{k=1}^d\left[I(\boldsymbol{\theta_0})\right]_{kj}Q_kQ_j - \sum_{j=1}^d\sum_{k=1}^dQ_jQ_k\left(\frac{\partial^2}{\partial\theta_k\partial\theta_j}\ell\left(\boldsymbol{\hat{\theta}_n(X)};\boldsymbol{X}\right) + n\left[I(\boldsymbol{\theta_0})\right]_{kj}\right)\\
\nonumber & \qquad + \frac{1}{3}\sum_{j=1}^d\sum_{k=1}^d\sum_{s=1}^dQ_jQ_kQ_s\frac{\partial^3}{\partial\theta_j\partial\theta_k\partial\theta_s}\ell\left(\boldsymbol{\tilde{\theta}};\boldsymbol{X}\right).
\end{align}
Then, a first order Taylor expansion of $\frac{\partial^2}{\partial\theta_j\partial\theta_k}\ell\left(\boldsymbol{\hat{\theta}_n(x)};\boldsymbol{x}\right)$ about $\boldsymbol{\theta_0}$ yields
\begin{align}
\label{Taylor_useful2}
\nonumber 2\log T_1 &= n\sum_{j=1}^d\sum_{k=1}^d\left[I(\boldsymbol{\theta_0})\right]_{kj}Q_kQ_j + R_1 + R_2\\
& = n(\boldsymbol{\hat{\theta}_n(X)} - \boldsymbol{\theta_0})^{\intercal}I(\boldsymbol{\theta_0})(\boldsymbol{\hat{\theta}_n(X)} - \boldsymbol{\theta_0}) + R_1 + R_2.
\end{align}
This finishes the proof. \end{proof} 

From Proposition \ref{t1} the next approximation of the log-likelihood ratio is almost immediate. Using  \eqref{loglikelihoodsplitT1T2}, 
 Proposition \ref{t1} and its analogous expression for $2 \log T_2$  with $\boldsymbol{\hat{\theta}_n(x)}$ replaced by $\boldsymbol{\hat{\theta}^{{\rm res}}(x)}$, 
\begin{align}
\label{cor:llr}  - 2 \log \Lambda & =
 n(\boldsymbol{\hat{\theta}_n(X)} - \boldsymbol{\theta_0})^{\intercal}I(\boldsymbol{\theta_0})(\boldsymbol{\hat{\theta}_n(X)} - \boldsymbol{\theta_0}) + R_1 + R_2\\
\nonumber & \quad-  n(\boldsymbol{\hat{\theta}^{{\rm res}}(X)} - \boldsymbol{\theta_0})^{\intercal}C (\boldsymbol{\hat{\theta}^{{\rm res}}(X)} - \boldsymbol{\theta_0}) - R_1^* - R_2^*,
\end{align}
where $R_1$ and $R_2$ are as in Proposition \ref{t1} and 
 $R_1^*$ and $R_2^*$ are the corresponding expressions with $\boldsymbol{\hat{\theta}_n(x)}$ replaced by $\boldsymbol{\hat{\theta}^{{\rm res}}(x)}$ .

\subsubsection{Quantification of the approximation for the score function} 

\begin{proposition}\label{scorefct} 
Under the assumptions of Theorem \ref{Theorem_i.n.i.d}, 
\begin{align}
\label{ratio_expression_2}
& n(\boldsymbol{\hat{\theta}_n(X)} - \boldsymbol{\theta_0})^{\intercal}I(\boldsymbol{\theta_0})(\boldsymbol{\hat{\theta}_n(X)} - \boldsymbol{\theta_0}) = \begin{pmatrix}\boldsymbol{\xi} \\ 
\boldsymbol{\eta}
\end{pmatrix}^{\intercal}[I(\boldsymbol{\theta_0})]^{-1}\begin{pmatrix}\boldsymbol{\xi} \\
\boldsymbol{\eta}
\end{pmatrix}\\ \nonumber 
& \qquad - \left(\boldsymbol{R_3} + \boldsymbol{R_4}\right)^{\intercal}\left(\boldsymbol{R_3} + \boldsymbol{R_4}\right) + 2\sqrt{n}\left(\boldsymbol{\hat{\theta}_n(X)} - \boldsymbol{\theta_0}\right)^{\intercal}[I(\boldsymbol{\theta_0})]^{\frac{1}{2}}(\boldsymbol{R_3} + \boldsymbol{R_4}),
\end{align}
with 
\begin{equation}
\label{R3R4}
\begin{aligned}
& \boldsymbol{R_3} = \boldsymbol{R_3(X,\theta_0)} = \frac{1}{\sqrt{n}}[I(\boldsymbol{\theta_0})]^{-\frac{1}{2}}\sum_{j=1}^{d}Q_j\left(\nabla \left(\frac{\partial}{\partial \theta_j}\ell(\boldsymbol{\theta_0};\boldsymbol{X})\right) + n[I(\boldsymbol{\theta_0})]_{[j]}\right)\\
& \boldsymbol{R_4} = \boldsymbol{R_4(X,\theta_0)} = \frac{1}{2\sqrt{n}}[I(\boldsymbol{\theta_0})]^{-\frac{1}{2}}\sum_{j=1}^{d}\sum_{q=1}^{d}Q_jQ_q\left(\nabla\left(\frac{\partial^2}{\partial\theta_j\partial\theta_q}\ell(\boldsymbol{\theta};\boldsymbol{X})\Big|_{\substack{\boldsymbol{\theta} = \boldsymbol{\theta_0^{*}}}}\right)\right).
\end{aligned}
\end{equation}
for some $\boldsymbol{\theta_0^*}$ between $\boldsymbol{\theta_0}$ and $\boldsymbol{\hat{\theta}_n(x)}$.
\end{proposition}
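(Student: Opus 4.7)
The plan is to derive the stated identity by a Taylor expansion of the score vector $\nabla \ell(\cdot;\boldsymbol{X})$, evaluated at $\boldsymbol{\hat{\theta}_n(X)}$, about the true parameter $\boldsymbol{\theta_0}$, then to rearrange algebraically and square both sides.

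First, since $\boldsymbol{\hat{\theta}_n(X)}$ is the MLE, $\nabla \ell(\boldsymbol{\hat{\theta}_n(X)};\boldsymbol{X}) = \boldsymbol{0}$. Regularity condition (R.C.2) allows a componentwise Taylor expansion with Lagrange remainder: for each $i \in \{1, \ldots, d\}$,
\begin{equation*}
0 = \frac{\partial \ell(\boldsymbol{\theta_0};\boldsymbol{X})}{\partial \theta_i} + \sum_{j=1}^{d} Q_j \frac{\partial^2 \ell(\boldsymbol{\theta_0};\boldsymbol{X})}{\partial \theta_i \partial \theta_j} + \frac{1}{2}\sum_{j=1}^{d}\sum_{q=1}^{d} Q_j Q_q \frac{\partial^3 \ell(\boldsymbol{\theta_0^*};\boldsymbol{X})}{\partial \theta_i \partial \theta_j \partial \theta_q}
\end{equation*}
for some $\boldsymbol{\theta_0^*}$ between $\boldsymbol{\theta_0}$ and $\boldsymbol{\hat{\theta}_n(X)}$. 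Collecting these components into a vector, adding and subtracting $nI(\boldsymbol{\theta_0})(\boldsymbol{\hat{\theta}_n(X)} - \boldsymbol{\theta_0})$ on the right so that the Hessian combines with the Fisher information inside the bracket defining $\boldsymbol{R_3}$, using $S(\boldsymbol{\theta_0}) = \sqrt{n}(\boldsymbol{\xi},\boldsymbol{\eta})^{\intercal}$ from \eqref{score}, and finally premultiplying by $\frac{1}{\sqrt{n}}[I(\boldsymbol{\theta_0})]^{-\frac{1}{2}}$, one arrives at the linearised identity
\begin{equation*}
\sqrt{n}\,[I(\boldsymbol{\theta_0})]^{\frac{1}{2}}\left(\boldsymbol{\hat{\theta}_n(X)} - \boldsymbol{\theta_0}\right) = [I(\boldsymbol{\theta_0})]^{-\frac{1}{2}}\begin{pmatrix}\boldsymbol{\xi}\\ \boldsymbol{\eta}\end{pmatrix} + \boldsymbol{R_3} + \boldsymbol{R_4},
\end{equation*}
with $\boldsymbol{R_3}$ and $\boldsymbol{R_4}$ exactly as in \eqref{R3R4}.

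Finally, I would take the squared Euclidean norm of both sides of this linearised identity. The left-hand side becomes $n(\boldsymbol{\hat{\theta}_n(X)} - \boldsymbol{\theta_0})^{\intercal} I(\boldsymbol{\theta_0})(\boldsymbol{\hat{\theta}_n(X)} - \boldsymbol{\theta_0})$, while the right-hand side expands into $(\boldsymbol{\xi},\boldsymbol{\eta})^{\intercal}[I(\boldsymbol{\theta_0})]^{-1}(\boldsymbol{\xi},\boldsymbol{\eta})$ plus the remainder square $(\boldsymbol{R_3}+\boldsymbol{R_4})^{\intercal}(\boldsymbol{R_3}+\boldsymbol{R_4})$, plus twice the cross term $\left[[I(\boldsymbol{\theta_0})]^{-\frac{1}{2}}(\boldsymbol{\xi},\boldsymbol{\eta})^{\intercal}\right]^{\intercal}(\boldsymbol{R_3}+\boldsymbol{R_4})$. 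Substituting the rearranged linearised identity $[I(\boldsymbol{\theta_0})]^{-\frac{1}{2}}(\boldsymbol{\xi},\boldsymbol{\eta})^{\intercal} = \sqrt{n}[I(\boldsymbol{\theta_0})]^{\frac{1}{2}}(\boldsymbol{\hat{\theta}_n(X)} - \boldsymbol{\theta_0}) - (\boldsymbol{R_3}+\boldsymbol{R_4})$ into this cross term converts it into $2\sqrt{n}(\boldsymbol{\hat{\theta}_n(X)} - \boldsymbol{\theta_0})^{\intercal}[I(\boldsymbol{\theta_0})]^{\frac{1}{2}}(\boldsymbol{R_3}+\boldsymbol{R_4}) - 2(\boldsymbol{R_3}+\boldsymbol{R_4})^{\intercal}(\boldsymbol{R_3}+\boldsymbol{R_4})$, and combining with the remainder square gives precisely \eqref{ratio_expression_2}. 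The derivation is entirely algebraic, so there is no genuine obstacle; the only points needing care are tracking signs when absorbing $nI(\boldsymbol{\theta_0})(\boldsymbol{\hat{\theta}_n(X)} - \boldsymbol{\theta_0})$ into the bracket defining $\boldsymbol{R_3}$, and when performing the cross-term substitution at the final step.
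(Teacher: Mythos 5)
Your proposal is correct and follows essentially the same route as the paper: a second-order Taylor expansion of the score equation $\nabla\ell(\boldsymbol{\hat{\theta}_n(X)};\boldsymbol{X})=\boldsymbol{0}$ about $\boldsymbol{\theta_0}$ (which the paper imports from \cite{anastasiou2015assessing}), rearranged and premultiplied by $\tfrac{1}{\sqrt{n}}[I(\boldsymbol{\theta_0})]^{-\frac{1}{2}}$ to obtain the linearised identity, followed by squaring and the same back-substitution of $[I(\boldsymbol{\theta_0})]^{-\frac{1}{2}}(\boldsymbol{\xi},\boldsymbol{\eta})^{\intercal}$ into the cross term. The sign bookkeeping you flag works out exactly as in the paper, yielding \eqref{ratio_expression_2}.
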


\begin{proof}
For $\boldsymbol{\theta_0^*}$ between $\boldsymbol{\theta_0}$ and $\boldsymbol{\hat{\theta}_n(x)}$, using results from \cite{anastasiou2015assessing}, we have that
\begin{align}
\nonumber n\sum_{j=1}^{d}[I(\boldsymbol{\theta_0})]_{kj}Q_j &= \frac{\partial}{\partial\theta_k}\ell(\boldsymbol{\theta_0};\boldsymbol{x})+ \sum_{j=1}^{d}Q_jT_{kj}\\
\nonumber & \quad + \frac{1}{2}\sum_{j=1}^{d}\sum_{q=1}^{d}Q_jQ_q\left(\frac{\partial^3}{\partial\theta_k\partial\theta_j\partial\theta_q}\ell(\boldsymbol{\theta};\boldsymbol{x})\Big|_{\substack{\boldsymbol{\theta} = \boldsymbol{\theta_0^{*}}}}\right).
\end{align}
which holds $\forall k \in \left\lbrace 1,2,\ldots,d \right\rbrace$ and therefore
\begin{align}
\nonumber & \sqrt{n}[I(\boldsymbol{\theta_0})]^{\frac{1}{2}}(\boldsymbol{\hat{\theta}_n(x)} - \boldsymbol{\theta_0})\\
\nonumber & = \frac{1}{\sqrt{n}}[I(\boldsymbol{\theta_0})]^{-\frac{1}{2}}\left\lbrace\vphantom{(\left(\sup_{\theta:|\theta-\theta_0|\leq\epsilon}\left|l^{(3)}(\theta;\boldsymbol{X})\right|\right)^2}\nabla(\ell(\boldsymbol{\theta_0};\boldsymbol{x})) +  \sum_{j=1}^{d}Q_j\left(\nabla \left(\frac{\partial}{\partial \theta_j}\ell(\boldsymbol{\theta_0};\boldsymbol{x})\right) + n[I(\boldsymbol{\theta_0})]_{[j]}\right)\vphantom{(\left(\sup_{\theta:|\theta-\theta_0|\leq\epsilon}\left|l^{(3)}(\theta;\boldsymbol{X})\right|\right)^2}\right\rbrace\\
\nonumber & \;\;+ \frac{1}{2\sqrt{n}}[I(\boldsymbol{\theta_0})]^{-\frac{1}{2}}\left\lbrace\vphantom{(\left(\sup_{\theta:|\theta-\theta_0|\leq\epsilon}\left|l^{(3)}(\theta;\boldsymbol{X})\right|\right)^2}\sum_{j=1}^{d}\sum_{q=1}^{d}Q_jQ_q\left(\nabla\left(\frac{\partial^2}{\partial\theta_j\partial\theta_q}\ell(\boldsymbol{\theta};\boldsymbol{x})\Big|_{\substack{\boldsymbol{\theta} = \boldsymbol{\theta_0^{*}}}}\right)\right)\vphantom{(\left(\sup_{\theta:|\theta-\theta_0|\leq\epsilon}\left|l^{(3)}(\theta;\boldsymbol{X})\right|\right)^2}\right\rbrace,
\end{align}
where $[I(\boldsymbol{\theta_0})]_{[j]}$ is the $j^{th}$ column of the matrix $I(\boldsymbol{\theta_0})$. Using the score vector notation \eqref{score}, we have that
\begin{equation}
\label{mid_result_with_R3_R4}
\sqrt{n}[I(\boldsymbol{\theta_0})]^{\frac{1}{2}}(\boldsymbol{\hat{\theta}_n(X)} - \boldsymbol{\theta_0}) = [I(\boldsymbol{\theta_0})]^{-\frac{1}{2}}\begin{pmatrix}\boldsymbol{\xi} \\
\boldsymbol{\eta}
\end{pmatrix} + \boldsymbol{R_3} + \boldsymbol{R_4},
\end{equation}
where $\boldsymbol{R_3}$ and $\boldsymbol{R_4}$ are as in \eqref{R3R4}. Using \eqref{mid_result_with_R3_R4} and  that $I(\boldsymbol{\theta_0})$ is a symmetric matrix leads  to
\begin{align}
\label{ratio_expression_1}
& n(\boldsymbol{\hat{\theta}_n(X)} - \boldsymbol{\theta_0})^{\intercal}I(\boldsymbol{\theta_0})(\boldsymbol{\hat{\theta}_n(X)} - \boldsymbol{\theta_0})\\
&\nonumber  = \begin{pmatrix}\boldsymbol{\xi} \\
\boldsymbol{\eta}
\end{pmatrix}^{\intercal}[I(\boldsymbol{\theta_0})]^{-1}\begin{pmatrix}\boldsymbol{\xi} \\
\boldsymbol{\eta}
\end{pmatrix} + \left(\boldsymbol{R_3} + \boldsymbol{R_4}\right)^{\intercal}\left(\boldsymbol{R_3} + \boldsymbol{R_4}\right) + 2\begin{pmatrix}\boldsymbol{\xi} \\ \nonumber
\boldsymbol{\eta}
\end{pmatrix}^{\intercal}[I(\boldsymbol{\theta_0})]^{-\frac{1}{2}}\left(\boldsymbol{R_3} + \boldsymbol{R_4}\right).
\end{align}
However, from \eqref{mid_result_with_R3_R4},
\begin{equation}
\nonumber \begin{pmatrix}\boldsymbol{\xi} \\
\boldsymbol{\eta}
\end{pmatrix}^{\intercal}[I(\boldsymbol{\theta_0})]^{-\frac{1}{2}}= \sqrt{n}\left(\boldsymbol{\hat{\theta}_n(X)} - \boldsymbol{\theta_0}\right)^{\intercal}[I(\boldsymbol{\theta_0})]^{\frac{1}{2}} - (\boldsymbol{R_3} + \boldsymbol{R_4})^{\intercal},
\end{equation}
so that
\begin{align}
\nonumber \begin{pmatrix}\boldsymbol{\xi} \\
\boldsymbol{\eta}
\end{pmatrix}^{\intercal}[I(\boldsymbol{\theta_0})]^{-\frac{1}{2}}\left(\boldsymbol{R_3} + \boldsymbol{R_4}\right) & = \sqrt{n}\left(\boldsymbol{\hat{\theta}_n(X)} - \boldsymbol{\theta_0}\right)^{\intercal}[I(\boldsymbol{\theta_0})]^{\frac{1}{2}}\left(\boldsymbol{R_3} + \boldsymbol{R_4}\right)\\
\nonumber & \quad - (\boldsymbol{R_3} + \boldsymbol{R_4})^{\intercal}\left(\boldsymbol{R_3} + \boldsymbol{R_4}\right).
\end{align}
Using this in \eqref{ratio_expression_1} yields the assertion. 
\end{proof}

A similar result holds for $T_2$. Following exactly the same steps as for  Proposition \eqref{scorefct}, but now with $\boldsymbol{\theta^*}$ instead of $\boldsymbol{\theta_0}$,
\begin{align}
\label{final_expression_T2}
\nonumber 2\log T_2 &=  \boldsymbol{\eta}^{\intercal}C^{-1}\boldsymbol{\eta}
 -{R}_1^* +{R}_2^*- \left(\boldsymbol{{R}_3^*} + \boldsymbol{{R}_4^*}\right)^{\intercal}\left(\boldsymbol{{R}_3^*} + \boldsymbol{{R}_4^*}\right)\\
& \qquad + 2\sqrt{n}\left(\boldsymbol{\hat{\theta}_*(X)} - \boldsymbol{\theta_*}\right)^{\intercal}C^{\frac{1}{2}}(\boldsymbol{{R}_3^*} + \boldsymbol{{R}_4^*}).
\end{align}
Combining \eqref{loglikelihoodsplitT1T2},  Proposition \ref{t1} and \eqref{final_expression_T2} gives 
\begin{align}\label{cor:key}
 & -2\log \Lambda = \begin{pmatrix}\boldsymbol{\xi} \\
\boldsymbol{\eta}
\end{pmatrix}^{\intercal}[I(\boldsymbol{\theta_0})]^{-1}\begin{pmatrix}\boldsymbol{\xi} \\
\boldsymbol{\eta}
\end{pmatrix} - \boldsymbol{\eta}^{\intercal}C^{-1}\boldsymbol{\eta} + R_{A_1} + R_{A_2}  + R_{B_1} + R_{B_2},
\end{align}
where
\begin{equation}
\begin{aligned}
\label{RARBterms}
& R_{A_1} = R_1 - \boldsymbol{R_3}^{\intercal}\boldsymbol{R_3} + 2\sqrt{n}\left(\boldsymbol{\hat{\theta}_n(X)} - \boldsymbol{\theta_0}\right)^{\intercal}[I(\boldsymbol{\theta_0})]^{\frac{1}{2}}\boldsymbol{R_3}\\
& R_{A_2} = -{R_1}^* + (\boldsymbol{{R}_3^*})^{\intercal}\boldsymbol{{R}_3^*} - 2\sqrt{n}\left(\boldsymbol{\hat{\theta}_*(X)} - \boldsymbol{\theta_*}\right)^{\intercal}C^{\frac{1}{2}}\boldsymbol{{R}_3^*}\\
& R_{B_1} = R_2 - \boldsymbol{R_4}^{\intercal}(\boldsymbol{R_3} + \boldsymbol{R_4}) - \boldsymbol{R_3}^{\intercal}\boldsymbol{R_4} + 2\sqrt{n}\left(\boldsymbol{\hat{\theta}_n(X)} - \boldsymbol{\theta_0}\right)^{\intercal}\left[I(\boldsymbol{\theta_0})\right]^{\frac{1}{2}}\boldsymbol{R_4}\\
& R_{B_2} = -{R}_2^* + (\boldsymbol{{R}_4^*})^{\intercal}(\boldsymbol{{R}_3^*} + \boldsymbol{{R}_4}^*) + (\boldsymbol{{R}_3^*})^{\intercal}\boldsymbol{{R}_4^*} - 2\sqrt{n}\left(\boldsymbol{\hat{\theta}_*(X)} - \boldsymbol{\theta_*}\right)^{\intercal}C^{\frac{1}{2}}\boldsymbol{{R}_4^*}.
\end{aligned}
\end{equation}
Here $R_1, R_2$ are as in Proposition \ref{t1} and $R_1^*, R_2^*$ are their respective versions when we work under the null hypothesis. Furthermore, $\boldsymbol{{R}_3}$ and $\boldsymbol{{R}_4}$ are as in Proposition \ref{scorefct}, and  $\boldsymbol{{R_3}^*}$ and $\boldsymbol{{R_4}^*}$ are the
corresponding remainder terms  from Proposition \ref{scorefct} with $\boldsymbol{\hat{\theta}_n(x)}$ replaced by $\boldsymbol{\hat{\theta}^{{\rm res}}(x)}$.
Note that $R_{A_1}$ and $R_{B_1}$ contain the terms that are obtained through $2\log T_1$, whereas $R_{A_2}$ and $R_{B_2}$ contain the quantities that are due to $2\log T_2$.

From now on, let 
\begin{equation}
\label{g_xi_eta}
g(\boldsymbol{\xi},\boldsymbol{\eta}) = \begin{pmatrix}\boldsymbol{\xi} \\
\boldsymbol{\eta}
\end{pmatrix}^{\intercal}[I(\boldsymbol{\theta_0})]^{-1}\begin{pmatrix}\boldsymbol{\xi} \\
\boldsymbol{\eta}
\end{pmatrix} - \boldsymbol{\eta}^{\intercal}C^{-1}\boldsymbol{\eta}.
\end{equation}
It is straightforward to simplify this expression to give 
\begin{equation} \label{lem:simple} 
g(\boldsymbol{\xi},\boldsymbol{\eta}) = ( \boldsymbol{\xi}- B C^{-1} \boldsymbol{\eta})^{\intercal} \left( A - B C^{-1} B^{\intercal}\right)^{-1} ( \boldsymbol{\xi}- B C^{-1} \boldsymbol{\eta}).
\end{equation}

\subsubsection{Quantifying the Chisquare approximation} 

Now we use a multivariate $r$-dimensional normal approximation for $ \boldsymbol{\xi}- B C^{-1} \boldsymbol{\eta}$ which is based on the asymptotic normality of $(\boldsymbol{\xi}, \boldsymbol{\eta})^{\intercal}$ (of dimension $d$). 
Noting that for any positive semidefinite $r \times r$ matrix $U$,
$( \boldsymbol{\xi}- B C^{-1} \boldsymbol{\eta})^{\intercal} U ( \boldsymbol{\xi}- B C^{-1} \boldsymbol{\eta})$ is a quadratic form and therefore we can  apply results from \cite{Gaunt_Reinert} for symmetric test functions to obtain an overall bound to the chisquare distribution with $r$ degrees of freedom. 
Here is the more detailed setup. Let $\boldsymbol{Z}$ have the standard $d$-dimensional multivariate normal distribution, so that for a positive definite $d \times d$ matrix $\Sigma = (\sigma_{j,k}), j,k=1,2,\ldots,d$, we have  $\Sigma^{1/2}\boldsymbol{Z}\sim\mathrm{MVN}(\mathbf{0},\Sigma)$. For a matrix $A \in \mathbb{R}^{k_1 \times k_2}$, we use  the $L_\infty$-matrix norm
$$ ||| A |||_\infty := \max_{1 \le i \le k_1} \sum_{j=1}^{k_2} | A_{i,j}|.$$

\begin{proposition} \label{gauntreinertcor}  Let $Y_{i,j}(\boldsymbol{\theta}) = \frac{\partial}{\partial \theta_i} \log f(\boldsymbol{X_j}| \boldsymbol{\theta})$,  and $\boldsymbol{W} = (\boldsymbol{\xi}, \boldsymbol{\eta})$ with 
$$\xi_i = \frac{1}{\sqrt{n}} \sum_{j=1}^n Y_{i,j}(\boldsymbol{\theta_0}), \;\; i=1, \ldots, r;  \quad 
\eta_i = \frac{1}{\sqrt{n}} \sum_{j=1}^n Y_{i,j}(\boldsymbol{\theta_0}), \;\; i=r+1, \ldots, d. 
$$
For $D$ an $r \times (d-r)$ matrix and $U$ a  positive semidefinite $r \times r$ matrix, set
$$g(\boldsymbol{x}, \boldsymbol{y})  = ( \boldsymbol{x}- D \boldsymbol{y})^{\intercal} U ( \boldsymbol{x}-D \boldsymbol{y}).$$
Then, under the assumptions of Theorem \ref{Theorem_i.n.i.d}, for $h\in C_b^2(\mathbb{R})$, 
\begin{equation}
\nonumber \left|\mathbb{E}\left[h(g(\boldsymbol{W}))\right]-\mathbb{E}\left[h\left(g\left(\left[I(\boldsymbol{\theta_0})\right]^{1/2}\boldsymbol{Z}\right)\right)\right]\right| \leq \frac{2 (||h' || + || h''||) }{\sqrt{n}} R(\boldsymbol{W}, U, D) 
\end{equation}
with 
\begin{align}
\label{RWLD}
\nonumber & R(\boldsymbol{W}, U, D) := \frac{c}{n} \min_{1\leq s\leq d}\mathbb{E}\left|\left(\left[I(\boldsymbol{\theta_0})\right]^{-1/2}\mathbf{Z}\right)_s\right|  \sum_{i=1}^n\sum_{j,k,l=1}^d\left\lbrace\vphantom{(\left(\sup_{\theta:|\theta-\theta_0|\leq\epsilon}\left|l^{(3)}(\theta;\boldsymbol{X})\right|\right)^2}\mathbb{E}\left|Y_{ji}Y_{ki}Y_{li}\right|\right.\\ 
\nonumber & \; \left. + {{8cd}}   \sum_{t=1}^d \bigg(4 \mathbb{E}\left|Y_{ji}Y_{ki}Y_{li}\right|\mathbb{E}\left(W_t^2\right) +\frac{4}{n}\mathbb{E}\left|Y_{ji}Y_{ki}Y_{li}Y_{ti}^{2}\right|+\frac{\mathbb{E}\left|\left(\left[I(\boldsymbol{\theta_0})\right]^{-1/2}\mathbf{Z}\right)_sZ_t^2\right|}{\mathbb{E}\left|\left(\left[I(\boldsymbol{\theta_0})\right]^{-1/2}\mathbf{Z}\right)_s\right|}\mathbb{E}\left|Y_{ji}Y_{ki}Y_{li}\right|\bigg)\right.\\
\nonumber &\; +\left. 2\left|\mathbb{E}\left(Y_{ji}Y_{ki}\right)\right|\left[\vphantom{(\left(\sup_{\theta:|\theta-\theta_0|\leq\epsilon}\left|l^{(3)}(\theta;\boldsymbol{X})\right|\right)^2}\mathbb{E}\left|Y_{li}\right|
+ {{16 cd}} \sum_{t=1}^d \left(\vphantom{(\left(\sup_{\theta:|\theta-\theta_0|\leq\epsilon}\left|l^{(3)}(\theta;\boldsymbol{X})\right|\right)^2}4\mathbb{E}\left|Y_{li}\right|\mathbb{E}\left(W_t^{2}\right)+\frac{4}{n}\mathbb{E}\left|Y_{li}Y_{ti}^{2}\right|\right.\right.\right.\\
&\qquad\qquad\qquad\qquad \left.\left.\left. +\frac{\mathbb{E}\left|\left(\left[I(\boldsymbol{\theta_0})\right]^{-1/2}\mathbf{Z}\right)_sZ_t^{2}\right|}{\mathbb{E}\left|\left(\left[I(\boldsymbol{\theta_0})\right]^{-1/2}\mathbf{Z}\right)_s\right|}\mathbb{E}\left|Y_{li}\right|\vphantom{(\left(\sup_{\theta:|\theta-\theta_0|\leq\epsilon}\left|l^{(3)}(\theta;\boldsymbol{X})\right|\right)^2}\right)\vphantom{(\left(\sup_{\theta:|\theta-\theta_0|\leq\epsilon}\left|l^{(3)}(\theta;\boldsymbol{X})\right|\right)^2}\right]\vphantom{(\left(\sup_{\theta:|\theta-\theta_0|\leq\epsilon}\left|l^{(3)}(\theta;\boldsymbol{X})\right|\right)^2}\right\rbrace.
\end{align}
Here,
\begin{equation}
\label{c}
c = c (U, D) =  \max \left\{  ||| U |||_\infty, ||| D^T U D |||_\infty, ||| D^T U |||_\infty \right\} .
\end{equation} 
\end{proposition}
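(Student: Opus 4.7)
The strategy is to apply the chi-square approximation theorem of \cite{Gaunt_Reinert} for symmetric test functions, using the fact that $g$ is quadratic. Writing the stacked vector $\boldsymbol{w}=(\boldsymbol{x}^{\intercal},\boldsymbol{y}^{\intercal})^{\intercal}\in\mathbb{R}^d$, one has $g(\boldsymbol{w})=\boldsymbol{w}^{\intercal}M\boldsymbol{w}$ with
\[
M=\begin{pmatrix}U & -UD\\ -D^{\intercal}U & D^{\intercal}UD\end{pmatrix},
\]
so that $g(-\boldsymbol{w})=g(\boldsymbol{w})$. Hence $h\circ g$ is a symmetric (even) function of $\boldsymbol{w}$, which is precisely the setting of \cite{Gaunt_Reinert}: their bound for a multivariate CLT composed with such a quadratic form only requires two bounded derivatives of $h$, rather than the three derivatives needed in a generic smooth-function bound. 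This is the source of the $\|h'\|+\|h''\|$ prefactor in the statement.

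Next I would check that the hypotheses of \cite{Gaunt_Reinert} hold for $\boldsymbol{W}=n^{-1/2}\sum_{j=1}^{n}\boldsymbol{Y}_j(\boldsymbol{\theta_0})$. By (R.C.4) the summands $\boldsymbol{Y}_j$ are centred; by (R.C.5) and \eqref{Fisherinfo} they have covariance $I(\boldsymbol{\theta_0})$, matching exactly the covariance of the target $[I(\boldsymbol{\theta_0})]^{1/2}\boldsymbol{Z}$; assumption (O3) supplies the finite fifth-order moments that enter the Stein estimate. Therefore \cite{Gaunt_Reinert} applies directly to the test function $\boldsymbol{w}\mapsto h(\boldsymbol{w}^{\intercal}M\boldsymbol{w})$ evaluated at $\boldsymbol{W}$.

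The proposition then follows by substituting $\boldsymbol{W}$ into the abstract bound and identifying constants. The Stein calculation produces a row-sum $L_\infty$ matrix norm of each block of $M$; I would collapse these using
\[
|||M|||_\infty\;\le\;\max\!\left\{|||U|||_\infty,\;|||D^{\intercal}U|||_\infty,\;|||D^{\intercal}UD|||_\infty\right\}=c(U,D),
\]
which is exactly \eqref{c}. The minimum over $s$ in \eqref{RWLD} arises as an optimisation over a free coordinate index in the Stein-equation integrator; the third-order mixed moments $\mathbb{E}|Y_{ji}Y_{ki}Y_{li}|$, the second-order terms $|\mathbb{E}(Y_{ji}Y_{ki})|$, and the auxiliary Gaussian factors $\mathbb{E}|([I(\boldsymbol{\theta_0})]^{-1/2}\boldsymbol{Z})_s Z_t^2|$ appear as the explicit content of that bound when specialised to the i.i.d. decomposition of $\boldsymbol{W}$.

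The main obstacle is organisational rather than mathematical: tracking how the block structure of $M$ interacts with the i.i.d. summands through the Stein integrator, and verifying that each of the three blockwise matrix-norm contributions is indeed dominated by $c(U,D)$. One also needs to confirm that the coordinate-wise notation of \cite{Gaunt_Reinert}, which works with a single vector, is translated faithfully to the $(\boldsymbol{\xi},\boldsymbol{\eta})$-splitting used throughout this paper. Once these identifications are in place, the bound \eqref{RWLD} is a transcription of the Gaunt--Reinert conclusion.
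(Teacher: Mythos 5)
Your overall route is the same as the paper's: apply Theorem 2.3 of \cite{Gaunt_Reinert} to the quadratic form $g$ evaluated at $\boldsymbol{W}$, after noting that $\boldsymbol{W}$ is a standardised sum of i.i.d.\ centred vectors with covariance $I(\boldsymbol{\theta_0})$ and that (O3) supplies the moments. The gap is precisely at the point you dismiss as ``organisational''. The cited theorem is not an off-the-shelf statement for arbitrary even test functions: it requires $g$ to belong to the class $C_P^2(\mathbb{R}^d)$, i.e.\ one must exhibit a dominating function $P(\boldsymbol{w})=A+\sum_{i=1}^d B_i|w_i|^{r_i}$ with $\bigl|\partial^k g(\boldsymbol{w})/\prod_{j=1}^k\partial w_{i_j}\bigr|^{2/k}\leq P(\boldsymbol{w})$ for $k=1,2$. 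Verifying this is the substance of the paper's proof: one computes the first and second partial derivatives of $g$, bounds them by $2c\sum_i|w_i|$ and $2c$ respectively (all third derivatives vanish), and takes $A=2c$, $B_i=4c^2d$, $r_i=2$. These constants are exactly what produce the factors $c$, $8cd$ and $16cd$ in \eqref{RWLD}; without this step the explicit form of the bound cannot be read off from the abstract theorem. The evenness of $h\circ g$ that you invoke is the motivation stated in the introduction, not the hypothesis actually checked in the proof.

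Moreover, your proposed mechanism for how $c$ enters is incorrect as stated. Writing $g(\boldsymbol{w})=\boldsymbol{w}^{\intercal}M\boldsymbol{w}$ with $M=\begin{pmatrix} U & -UD\\ -D^{\intercal}U & D^{\intercal}UD\end{pmatrix}$, it is not true that $|||M|||_\infty\leq\max\left\{|||U|||_\infty,\,|||D^{\intercal}U|||_\infty,\,|||D^{\intercal}UD|||_\infty\right\}$: already for $r=d-r=1$ and $U=D=1$ one has $|||M|||_\infty=2$ while $c(U,D)=1$, since a row of $M$ mixes two blocks. What is true, and what the argument needs, is the entrywise bound $|M_{ij}|\leq c$ (each entry of each block is dominated by the corresponding row-sum norm appearing in \eqref{c}, using that $U$ is symmetric so $(UD)^{\intercal}=D^{\intercal}U$), which yields $\bigl|\partial^2 g/\partial w_i\partial w_j\bigr|\leq 2c$ and, combined with the resulting first-derivative bound, the $C_P^2$ domination above. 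With that correction your sketch coincides with the paper's proof.
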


\begin{remark}
\begin{enumerate}
\item 
Note that while \cite{Gauntetal} also provide a bound of order $n^{-1}$ for this approximation, the terms in the bound require higher moment conditions.  As the overall order of the bound in Theorem \ref{Theorem_i.n.i.d} is $n^{-1/2}$, we use the simpler bound from \cite{Gauntetal} here. 
\item Proposition \ref{gauntreinertcor} can also be applied to the score-test like statistic 
\begin{eqnarray}\label{scoretestexpression}
\begin{pmatrix}\boldsymbol{\xi} \\
\boldsymbol{\eta}
\end{pmatrix}^{\intercal}[I(\boldsymbol{\theta_0})]^{-1}\begin{pmatrix}\boldsymbol{\xi} \\
\boldsymbol{\eta}
\end{pmatrix} 
\end{eqnarray} 
which is closely related to the classical score test statistic in which $I(\boldsymbol{\theta_0})$ is replaced by $I(\boldsymbol{\hat{\theta}})$. 
For the statistic \eqref{scoretestexpression} the  order $n^{-1}$ bound in \cite{Gauntetal} would apply. Using Taylor expansion to assess $[I(\boldsymbol{\theta_0})]^{-1} - [I(\boldsymbol{\hat{\theta}})]^{-1}$ it is straightforward to obtain a bound on the distance to the appropriate chisquare distribution for the score test. Due to space issues we do not pursue this application here. 
\end{enumerate} 
\end{remark}

\begin{proof}
Recall that by Assumption (R.C.5) the covariance matrix of $\boldsymbol{W}$ is $I(\boldsymbol{\theta_0})$, which is positive definite. In order to apply Theorem 2.{{3}} in \cite{Gaunt_Reinert} we need to show that the function $g$ in the assertion belongs to the class $C_{P}^2(\mathbb{R}^d)$. A function $g:\mathbb{R}^d\rightarrow\mathbb{R}$ is said to belong to the class $C_{P}^2(\mathbb{R}^d)$ if all second order partial derivatives of $g$ exist and there exists a dominating function $P:\mathbb{R}^d\rightarrow\mathbb{R}^+$ such that, for all $\boldsymbol{w}\in\mathbb{R}^d$, the partial derivatives satisfy
\begin{equation*}\bigg|\frac{\partial^kg(\boldsymbol{w})}{\prod_{j=1}^k\partial w_{i_j}}\bigg|^{2/k}\leq P(\mathbf{w}):=A+\sum_{i=1}^d B_i|w_i|^{r_i}, \quad k=1,2,
\end{equation*}
where $A\geq 0$, $B_1,\ldots,B_d \geq 0$ and $r_1,\ldots,r_d \geq 0$. 
The first partial derivatives of $g$ are 
\begin{align}
\nonumber & \frac{\partial}{\partial x_i} g( \boldsymbol{x},\boldsymbol{y}) =  2 \sum_{j=1}^{r}U_{j,i} \left( x_i-  \sum_{s=1}^{d-r} D_{i,s} y_s \right), \quad i=1, \ldots, r 
\\
\nonumber & \frac{\partial}{\partial y_i} g( \boldsymbol{x}, \boldsymbol{y}) 
=  - 2 \sum_{j=1}^r \sum_{l=1}^{r} D_{j,i} U_{j, l}  \left( x_l -  \sum_{s=1}^{d-r} D_{l,s} y_s \right), \quad i=1, \ldots, d-r.
\end{align}
Hence with $c$ given in \eqref{c},
\begin{equation}
\nonumber \bigg|\frac{\partial g(\boldsymbol{x}, \boldsymbol{y})}{\partial x_{i}}\bigg|^2 \leq  4  c^2 \left(|x_i| + \sum_{s=1}^{d-r} | y_s| \right)^2; \;\; 
 \bigg|\frac{\partial}{\partial y_i} g( \boldsymbol{x}, \boldsymbol{y}) \bigg|^2 \leq  4c^2  \left(\sum_{i=1}^r |x_i| + \sum_{s=1}^{d-r} | y_s| \right)^2.
\end{equation} 
So with $\boldsymbol{w} = (\boldsymbol{x}, \boldsymbol{y})$ the concatenation of $\boldsymbol{x}$ and $ \boldsymbol{y}$,
\begin{equation}
\nonumber \bigg|\frac{\partial g(\boldsymbol{w})}{\partial w_{i}}\bigg|^{2} \leq 4c^2 \left(\sum_{i=1}^d |w_i|  \right)^2 
\le 4dc^2 \sum_{i=1}^d |w_i|^2. 
\end{equation}
Similarly, for the second partial derivatives, 
with $c$ given in \eqref{c} and $\boldsymbol{w} = (\boldsymbol{x}, \boldsymbol{y})$ the concatenation of $\mathbf{x}$ and $ \mathbf{y}$,
\begin{equation}
\nonumber \bigg|\frac{\partial^2 g(\boldsymbol{w})}{\prod_{j=1}^2\partial w_{i_j}}\bigg| \leq 2  c.  
\end{equation}
All higher partial derivatives of $g$ vanish. 
Hence for $k=1, 2$ 
\begin{equation*}\bigg|\frac{\partial^k g(\boldsymbol{w})}{\prod_{j=1}^k\partial w_{i_j}}\bigg|^{2/k}
\leq A+\sum_{i=1}^d B_i|w_i|^{r_i}, \end{equation*}
with $A = 2c, B_i = 4c^2 d, r_i = 2$ for $i=1, \ldots, d$. Now applying Theorem {{2.3}}  in \cite{Gaunt_Reinert} gives that 
\begin{equation}
\nonumber \left|\mathbb{E}\left[h\left(g\left(\boldsymbol{W}\right)\right)\right]-\mathbb{E}\left[h\left(g\left(\left[I(\boldsymbol{\theta_0})\right]^{1/2}\mathbf{Z}\right)\right)\right]\right| \leq \frac{2 (||h' || + || h''||) }{\sqrt{n}} R(\boldsymbol{W}, U, D) 
\end{equation}
with $R(\boldsymbol{W}, U, D)$ as in \eqref{RWLD}, which gives the assertion. 
\end{proof} 

\subsubsection{Bounding the remainder terms}

In this section we shall bound expectations of the log-likelihood ratio statistics under smooth test functions.  Let $h \in \C_b^2(\mathbb{R})$ and $K \sim \chi^2_{r}$.
Using the triangle inequality and \eqref{cor:key}, 
\begin{align*}
\nonumber & \left|\mathbb{E}\left[h\left(-2\log \Lambda\right)\right]- \mathbb{E}[h(K)]\right| = \left|\mathbb{E}\left[h\left(g(\boldsymbol{\xi},\boldsymbol{\eta}) + R_{A_1} + R_{A_2} + R_{B_1} + R_{B_2}\right)\right]- \mathbb{E}[h(K)]\right|\\
&\leq \left|\mathbb{E}\left[h\left(g(\boldsymbol{\xi},\boldsymbol{\eta}) + R_{A_1} + R_{A_2} + R_{B_1} + R_{B_2}\right) - h\left(g(\boldsymbol{\xi},\boldsymbol{\eta}) + R_{B_1} + R_{B_2}\right)\right]\right| 
\\
& \;\; + \left|\mathbb{E}\left[h\left(g(\boldsymbol{\xi},\boldsymbol{\eta}) + R_{B_1} + R_{B_2}\right) - h\left(g(\boldsymbol{\xi},\boldsymbol{\eta})\right)\right]\right|+ \left|\mathbb{E}\left[h\left(g(\boldsymbol{\xi},\boldsymbol{\eta})\right)\right] - \mathbb{E}\left[h(K)\right]\right|.
\end{align*}
The terms to bound are hence
\begin{equation}
\left|\mathbb{E}\left[h\left(g(\boldsymbol{\xi},\boldsymbol{\eta}) + R_{A_1} + R_{A_2} + R_{B_1} + R_{B_2}\right) - h\left(g(\boldsymbol{\xi},\boldsymbol{\eta}) + R_{B_1} + R_{B_2}\right)\right]\right| \label{term1_multi}
\end{equation} 
and
\begin{equation}
\label{term2_multi}
 \left|\mathbb{E}\left[h\left(g(\boldsymbol{\xi},\boldsymbol{\eta}) + R_{B_1} + R_{B_2}\right) - h\left(g(\boldsymbol{\xi},\boldsymbol{\eta})\right)\right]\right|
\end{equation} 
as well as 
\begin{equation}
\label{term3_multi}
 \left|\mathbb{E}\left[h\left(g(\boldsymbol{\xi},\boldsymbol{\eta})\right)\right] - \mathbb{E}\left[h(K)\right]\right|.
\end{equation} 
We now bound these three terms  in order to complete the proof of Theorem \ref{Theorem_i.n.i.d}.
\vspace{0.05in}
\\
1.{\textbf{Bounding Term \eqref{term1_multi}}}\\ 
For $t(\boldsymbol{X})$ between $g(\boldsymbol{\xi},\boldsymbol{\eta}) + R_{A_1} + R_{A_2} + R_{B_1} + R_{B_2}$ and $g(\boldsymbol{\xi},\boldsymbol{\eta}) + R_{B_1} + R_{B_2}$, a first order Taylor expansion yields
\begin{equation}
\nonumber \eqref{term1_multi} = \left|\mathbb{E}\left[h'(t(\boldsymbol{X}))(R_{A_1} + R_{A_2})\right]\right| \leq \|h'\|\mathbb{E}\left[\left|R_{A_1}\right| + \left|R_{A_2}\right|\right].
\end{equation}
We start by bounding $\mathbb{E}\left|R_{A_1}\right|$, where
\begin{align}
\label{R_Amidbound}
\mathbb{E}\left|R_{A_1}\right| \leq & \mathbb{E}\left|R_1\right| + \mathbb{E}\left|\boldsymbol{R_3^{\intercal}}\boldsymbol{R_3}\right| + 2\sqrt{n}\mathbb{E}\left|\left(\boldsymbol{\hat{\theta}_n(X)} - \boldsymbol{\theta_0}\right)^{\intercal}[I(\boldsymbol{\theta_0})]^{\frac{1}{2}}\boldsymbol{R_3}\right|.
\end{align}
With the notation in \eqref{cm}, 
\begin{align}
\label{bound_for_R1}
\nonumber \mathbb{E}\left|R_1\right| & \leq \sum_{j=1}^d\sum_{k=1}^d\mathbb{E}\left|Q_jQ_kT_{kj}\right|\leq \sum_{j=1}^d\sum_{k=1}^d\left[\mathbb{E}\left(Q_j^2Q_k^2\right)\right]^{\frac{1}{2}}\left[\mathbb{E}\left(T_{kj}^2\right)\right]^{\frac{1}{2}}\\
& = \sqrt{n}\sum_{j=1}^d\sum_{k=1}^d\left[\mathbb{E}\left(Q_j^2Q_k^2\right)\right]^{\frac{1}{2}}\left[{\rm Var}\left(\frac{\partial^2}{\partial\theta_j\partial\theta_k}\log f(\boldsymbol{X_1}|\boldsymbol{\theta_0})\right)\right]^{\frac{1}{2}}.
\end{align}
Using H\"{o}lder's inequality twice, 
\begin{align}
\label{R3tR3boundfinal}
 & \mathbb{E}\left|\boldsymbol{R_3}^{\intercal}\boldsymbol{R_3}\right| \leq \frac{1}{n}\sum_{l=1}^{d}\sum_{m=1}^{d}\left[\left[I(\boldsymbol{\theta_0})\right]^{-1}\right]_{lm}\sum_{j=1}^{d}\sum_{k=1}^{d}\mathbb{E}\left|Q_jQ_kT_{lj}T_{mk}\right|\\
\nonumber & \leq \frac{1}{\sqrt{n}}\sum_{l=1}^{d}\sum_{m=1}^{d}\left[\left[I(\boldsymbol{\theta_0})\right]^{-1}\right]_{lm}\\
& \quad  \times \sum_{j=1}^{d}\sum_{k=1}^{d}\sqrt{{\rm Var}\left(\frac{\partial^2}{\partial\theta_l\partial\theta_j}\log f(X_1|\boldsymbol{\theta_0})\right)}\left[\mathbb{E}\left(Q_j^6\right)\mathbb{E}\left(Q_k^6\right)\mathbb{E}\left(T_{mk}^6\right)\right]^{\frac{1}{6}}. \nonumber
\end{align}

Moreover, 
\begin{align}
\label{bound3RA}
\nonumber & 2\sqrt{n}\mathbb{E}\left|\left(\boldsymbol{\hat{\theta}_n(X)} - \boldsymbol{\theta_0}\right)^{\intercal}[I(\boldsymbol{\theta_0})]^{\frac{1}{2}}\boldsymbol{R_3}\right| \leq 2\sum_{l=1}^d\sum_{j=1}^d\mathbb{E}\left|Q_lQ_jT_{lj}\right|\\
& \leq 2\sqrt{n}\sum_{l=1}^d\sum_{j=1}^d\sqrt{\mathbb{E}\left(Q_l^2Q_j^2\right)}\sqrt{{\rm Var}\left(\frac{\partial^2}{\partial\theta_l\partial\theta_j}\log f(X_1|\boldsymbol{\theta_0})\right)}.
\end{align}
Combining the results in \eqref{R_Amidbound}, \eqref{bound_for_R1}, \eqref{R3tR3boundfinal}, and \eqref{bound3RA}, yields to
\begin{align}
\nonumber & \mathbb{E}\left|R_{A_1}\right| \leq \frac{1}{\sqrt{n}}K_1(\boldsymbol{\theta_0}),
\end{align}
with $K_1(\boldsymbol{\theta_0})$ as in \eqref{K1}. In order to bound $\mathbb{E}\left|R_{A_2}\right|$, we follow exactly the same process that was followed to bound $\mathbb{E}\left|R_{A_1}\right|$, but now under the null hypothesis,
to conclude that
$\mathbb{E}\left|R_{A_2}\right| \leq \frac{1}{\sqrt{n}}K_1^*(\boldsymbol{\theta_0}),
$
and therefore
\begin{align}
\label{boundsecondterm}
& \eqref{term1_multi} \leq \frac{1}{\sqrt{n}}\left(K_1(\boldsymbol{\theta_0}) + K_1^*(\boldsymbol{\theta_0})\right).
\end{align}
2.{\textbf{Bounding Term \eqref{term2_multi}}}\\
The terms in $R_{B_1}$ and $R_{B_2}$ of \eqref{RARBterms} may not be uniformly bounded in $\boldsymbol{\theta}$. The triangle inequality leads to
\begin{align}
\label{A_3_1}
\eqref{term2_multi} & \leq \left|\mathbb{E}\left[h\left(g(\boldsymbol{\xi},\boldsymbol{\eta}) + R_{B_1} + R_{B_2}\right) - h\left(g(\boldsymbol{\xi},\boldsymbol{\eta}) + R_{B_2}\right)\right]\right|
\\
\label{A_3_2}
& \qquad + \left|\mathbb{E}\left[h\left(g(\boldsymbol{\xi},\boldsymbol{\eta}) + R_{B_2}\right) - h\left(g(\boldsymbol{\xi},\boldsymbol{\eta})\right)\right]\right|.
\end{align}
{{\textit{Bound for} \eqref{A_3_1}:}} Let $0 < \epsilon \le  \epsilon(\boldsymbol{\theta_0})$. With $Q_{(m)}$ as in \eqref{cm}, the law of total expectation, the Cauchy-Schwarz inequality, and Markov's inequality yield
\begin{align*}
\nonumber & \eqref{A_3_1} \leq  \mathbb{E}\left|h\left(g(\boldsymbol{\xi},\boldsymbol{\eta}) + R_{B_1} + R_{B_2}\right) - h\left(g(\boldsymbol{\xi},\boldsymbol{\eta}) + R_{B_2}\right)\right|\\
\nonumber & \leq 2\|h\|\Prob\left(\left|Q_{{m}}\right|\geq \epsilon\right) \\
\nonumber & +\mathbb{E}\left[\left|h\left(g(\boldsymbol{\xi},\boldsymbol{\eta}) + R_{B_1} + R_{B_2}\right) - h\left(g(\boldsymbol{\xi},\boldsymbol{\eta}) + R_{B_2}\right)\right|\middle| \left|Q_{(m)}\right| < \epsilon\right]\Prob\left(\left|Q_{(m)}\right|<\epsilon\right)\\
\nonumber & \leq 2\frac{\|h\|}{\epsilon^2}\mathbb{E}\left(\sum_{j=1}^{d}Q_j^2\right) \\
& \qquad\;\;+ \mathbb{E}\left[\left|h\left(g(\boldsymbol{\xi},\boldsymbol{\eta}) + R_{B_1} + R_{B_2}\right) - h\left(g(\boldsymbol{\xi},\boldsymbol{\eta}) + R_{B_2}\right)\right|\middle| \left|Q_{(m)}\right| < \epsilon\right].
\end{align*}
A first order Taylor expansion yields
\begin{align}
\label{midstepforA_3_1}
\nonumber & \mathbb{E}\left[\left|h\left(g(\boldsymbol{\xi},\boldsymbol{\eta}) + R_{B_1} + R_{B_2}\right) - h\left(g(\boldsymbol{\xi},\boldsymbol{\eta}) + R_{B_2}\right)\right|\middle| \left|Q_{(m)}\right| < \epsilon\right]\\
\nonumber & \leq \|h'\|\mathbb{E}\left[\left|R_{B_1}\right|\middle|\left|Q_{(m)}\right|<\epsilon\right]\\
\nonumber & \leq \|h'\|\mathbb{E}\left[\left(\left|R_2\right|+ \left|\boldsymbol{R_4}^{\intercal}\left(\boldsymbol{R_3} + \boldsymbol{R_4}\right)\right|+\left|\boldsymbol{R_3}^{\intercal}\boldsymbol{R_4}\right|\right.\right.\\
& \left.\left.\qquad\qquad\; + 2\sqrt{n}\left|\left(\boldsymbol{\hat{\theta}_n(X)} - \boldsymbol{\theta_0}\right)^{\intercal}\left[I(\boldsymbol{\theta_0})\right]^{\frac{1}{2}}\boldsymbol{R_4}\right|\right)\middle|\left|Q_{(m)}\right|<\epsilon\right].
\end{align}
From now on, we denote
\begin{equation}
\label{Delta_qml}
\Delta_{qsl}:=\Delta_{qsl}\left(\boldsymbol{X},\boldsymbol{\theta_0}\right) = \frac{\partial^3}{\partial\theta_q\partial\theta_s\partial\theta_l}\ell(\boldsymbol{\theta_0^*};\boldsymbol{X})
\end{equation}
and we bound the terms in \eqref{midstepforA_3_1} in turns.\\ 

{\textit{Bound for }}$\mathbb{E}\left|R_2\right|$: With $R_2$ as in Proposition \ref{t1}, it is straightforward that for $\Delta_{qsl}$ as in \eqref{Delta_qml},
\begin{align}
\label{newboundR2}
\nonumber \mathbb{E}\left(\left|R_2\right|\middle|\left|Q_{(m)}\right|<\epsilon\right) & \leq \frac{4}{3}\sum_{j=1}^d\sum_{k=1}^d\sum_{s=1}^d\mathbb{E}\left(\left|Q_jQ_kQ_s\Delta_{jks}\right|\middle|\left|Q_{(m)}\right|<\epsilon\right)\\
& \leq \frac{4}{3}\sum_{j=1}^{d}\sum_{k=1}^{d}\sum_{s=1}^{d}\sqrt{\mathbb{E}\left(Q_j^2Q_k^2Q_s^2\right)}\left[\mathbb{E}\left(\left(M_{jks}(\boldsymbol{X})\right)^2\middle|\left|Q_{(m)}\right|<\epsilon\right)\right]^{\frac{1}{2}}. 
\end{align}

{\textit{Bound for }}$\mathbb{E}\left(\left|\boldsymbol{R_4}^{\intercal}\boldsymbol{R_3}\right|\middle|\left|Q_{(m)}\right|<\epsilon\right)$ {\textit{and}} $\mathbb{E}\left(\left|\boldsymbol{R_3}^{\intercal}\boldsymbol{R_4}\right|\middle|\left|Q_{(m)}\right|<\epsilon\right)$: With $\Delta_{qsl}$ as in \eqref{Delta_qml} and $T_{kj}$ as in \eqref{cm}, using H\"{o}lder's inequality and \cite{anastasiou2015assessing}, Lemma 4.1, we obtain that
\begin{align}
\label{boundR4tR3}
 & \mathbb{E}\left(\left|\boldsymbol{R_4}^{\intercal}\boldsymbol{R_3}\right|\middle|\left|Q_{(m)}\right|<\epsilon\right)\\
\nonumber & \leq \frac{1}{2n}\sum_{q=1}^{d}\sum_{k=1}^{d}\left|\left[\left[I(\boldsymbol{\theta_0})\right]^{-1}\right]_{kq}\right|\sum_{j=1}^{d}\sum_{l=1}^{d}\sum_{s=1}^{d}\mathbb{E}\left[\left|Q_jQ_lQ_sT_{kj}\Delta_{qsl}\right|\middle|\left|Q_{(m)}\right|<\epsilon\right]\\
\nonumber & \leq \frac{1}{2n}\sum_{q=1}^{d}\sum_{k=1}^{d}\left|\left[\left[I(\boldsymbol{\theta_0})\right]^{-1}\right]_{kq}\right|\sum_{j=1}^{d}\sum_{l=1}^{d}\sum_{s=1}^{d}\sqrt{\mathbb{E}\left(Q_j^2Q_l^2Q_s^2\right)}\left[\mathbb{E}\left(T_{kj}^4\middle|\left|Q_{(m)}\right|<\epsilon\right)\right]^{\frac{1}{4}}\\
&\qquad\quad\times\left[\mathbb{E}\left(\left(M_{qml}(\boldsymbol{X})\right)^4\middle|\left|Q_{(m)}\right|<\epsilon\right)\right]^{\frac{1}{4}}.\nonumber
\end{align}
Since $\boldsymbol{R_3}^{\intercal}\boldsymbol{R_4} = \boldsymbol{R_4}^{\intercal}\boldsymbol{R_3}$,  $\mathbb{E}\left(\left|\boldsymbol{R_3}^{\intercal}\boldsymbol{R_4}\right|\middle|\left|Q_{(m)}\right|<\epsilon\right)$ can also be bounded by  \eqref{boundR4tR3}.\\
{\textit{Bound for }}$\mathbb{E}\left(\left|\boldsymbol{R_4}^{\intercal}\boldsymbol{R_4}\right|\middle|\left|Q_{(m)}\right|<\epsilon\right)$: 
Again with \cite{anastasiou2015assessing}, Lemma 4.1 and the Cauchy-Schwarz inequality, and with $\Delta_{qsl}$ as in \eqref{Delta_qml},
\begin{align}
\label{boundR4tR4}
& \mathbb{E}\left(\left|\boldsymbol{R_4}^{\intercal}\boldsymbol{R_4}\right|\middle|\left|Q_{(m)}\right|<\epsilon\right)\\
\nonumber & \leq \frac{1}{4n}\sum_{b=1}^{d}\sum_{k=1}^{d}\sum_{s=1}^{d}\sum_{q=1}^{d}\sum_{l=1}^{d}\sum_{j=1}^{d}\left|\left[\left[I(\boldsymbol{\theta_0})\right]^{-1}\right]_{qb}\right|\mathbb{E}\left[\left|Q_kQ_sQ_jQ_l\Delta_{bsk}\Delta_{qjl}\right|\middle|\left|Q_{(m)}\right|<\epsilon\right] \\
\nonumber & \leq \frac{1}{4n}\sum_{b=1}^{d}\sum_{k=1}^{d}\sum_{s=1}^{d}\sum_{q=1}^{d}\sum_{l=1}^{d}\sum_{j=1}^{d}\left|\left[\left[I(\boldsymbol{\theta_0})\right]^{-1}\right]_{qb}\right|\sqrt{\mathbb{E}\left(Q_k^2Q_s^2Q_j^2Q_l^2\right)}\\
& \qquad\quad \times \left[\mathbb{E}\left(\left(M_{bsk}(\boldsymbol{X})\right)^4\middle|\left|Q_{(m)}\right|<\epsilon\right)\right]^{\frac{1}{4}}\left[\mathbb{E}\left(\left(M_{qjl}(\boldsymbol{X})\right)^4\middle|\left|Q_{(m)}\right|<\epsilon\right)\right]^{\frac{1}{4}}. \nonumber 
\end{align}

{\textit{Bound for }}$\mathbb{E}\left(2\sqrt{n}\left|\left(\boldsymbol{\hat{\theta}_n(X)} - \boldsymbol{\theta_0}\right)^{\intercal}\left[I(\boldsymbol{\theta_0})\right]^{\frac{1}{2}}\boldsymbol{R_4}\right|\middle|\left|Q_{(m)}\right|<\epsilon\right)$: 
A similar process as the one  to obtain the bounds in \eqref{boundR4tR3} and \eqref{boundR4tR4} yields
\begin{align}
\label{bound_for_last_term_RB1}
& \mathbb{E}\left(2\sqrt{n}\left|\left(\boldsymbol{\hat{\theta}_n(X)} - \boldsymbol{\theta_0}\right)^{\intercal}\left[I(\boldsymbol{\theta_0})\right]^{\frac{1}{2}}\boldsymbol{R_4}\right|\middle|\left|Q_{(m)}\right|<\epsilon\right)\\
\nonumber & \leq \sum_{l=1}^{d}\sum_{j=1}^{d}\sum_{q=1}^{d}\mathbb{E}\left(\left|Q_lQ_jQ_q\Delta_{ljq}\right|\middle|\left|Q_{(m)}\right|<\epsilon\right)\\
& \leq \sum_{l=1}^{d}\sum_{j=1}^{d}\sum_{q=1}^{d}\sqrt{\mathbb{E}\left(Q_l^2Q_j^2Q_q^2\right)}\left[\mathbb{E}\left(\left(M_{ljq}(\boldsymbol{X})\right)^2\middle|\left|Q_{(m)}\right|<\epsilon\right)\right]^{\frac{1}{2}}.\nonumber 
\end{align}
Combining the results for the bound  on \eqref{A_3_1}, \eqref{midstepforA_3_1}, \eqref{boundR4tR3}, \eqref{boundR4tR4}, and \eqref{bound_for_last_term_RB1}, we conclude that
\begin{align}
\label{final_boundRB1}
\eqref{A_3_1} \leq & \frac{1}{\sqrt{n}}K_2(\boldsymbol{\theta_0}),
\end{align}
with $K_2(\boldsymbol{\theta_0})$ as in \eqref{K2}.\\

{\textit{Bound for} \eqref{A_3_2}:} With $Q^*_{(m)}$ as in \eqref{cm}, the law of total expectation, the Cauchy-Schwarz inequality, and Markov's inequality yield
\begin{align}
\nonumber \eqref{A_3_2} & \leq  \mathbb{E}\left|h\left(g(\boldsymbol{\xi},\boldsymbol{\eta}) + R_{B_2}\right) - h\left(g(\boldsymbol{\xi},\boldsymbol{\eta})\right)\right|\\
\nonumber & \leq 2\|h\|\Prob\left(\left|Q^*_{{m}}\right|\geq \epsilon\right)\\
\nonumber & \;\; + \mathbb{E}\left[\left|h\left(g(\boldsymbol{\xi},\boldsymbol{\eta}) + R_{B_2}\right) - h\left(g(\boldsymbol{\xi},\boldsymbol{\eta})\right)\right|\middle| \left|Q^*_{(m)}\right| < \epsilon\right]\Prob\left(\left|Q^*_{(m)}\right|<\epsilon\right).
\end{align}
Finding an upper bound for this expression follows the same arguments as the one 
to bound \eqref{A_3_1} and therefore, it will not be repeated; the result is 
\begin{align}
\label{final_boundRB2}
\eqref{A_3_2} \leq \frac{1}{\sqrt{n}}K_2^*(\boldsymbol{\theta_0}),
\end{align}
where $K_2^*(\boldsymbol{\theta_0})$ is the version of $K_2(\boldsymbol{\theta_0})$  under the null hypothesis.
\vspace{0.05in}
\\
3.{\textbf{Bounding Term \eqref{term3_multi}}}\\
From \eqref{lem:simple}, 
\begin{equation} 
\nonumber g(\boldsymbol{\xi},\boldsymbol{\eta}) = \left(\boldsymbol{\xi}- B C^{-1} \boldsymbol{\eta}\right)^{\intercal} \left( A - B C^{-1} B^{\intercal}\right)^{-1}\left(\boldsymbol{\xi}- B C^{-1}\boldsymbol{\eta}\right)
\end{equation}
and 
\begin{align}
\nonumber & \mathbb{E}\left[h\left(g\left(\boldsymbol{\xi},\boldsymbol{\eta}\right)\right)\right] - \mathbb{E}[h(K)] = \mathbb{E}\left[h\left(g\left(\boldsymbol{\xi},\boldsymbol{\eta}\right)\right)\right]  -\mathbb{E}\left[h\left(g\left([I(\boldsymbol{\theta_0})]^{1/2}\boldsymbol{Z}\right)\right)\right].
\end{align}
With
\begin{equation}
\nonumber U = \left( A - B C^{-1} B^T\right)^{-1} ; \quad D = B C^{-1}  , 
\end{equation}
Proposition \ref{gauntreinertcor} yields 
\begin{equation}
\nonumber \left| \mathbb{E}\left[h\left(g(\boldsymbol{\xi},\boldsymbol{\eta})\right)\right]  -\mathbb{E}\left[h\left(g\left(\left[I(\boldsymbol{\theta_0})\right]^{1/2}\mathbf{Z}\right)\right)\right]\right| \le 2  \frac{(||h' || + || h''||)}{\sqrt{n}} R\left(  \left(\boldsymbol{\xi},\boldsymbol{\eta}\right); U, D\right) 
\end{equation}
where $R\left(  \left(\boldsymbol{\xi},\boldsymbol{\eta}\right); U, D\right)$ is as in Proposition \ref{gauntreinertcor}. 

To see that $\mathbb{E}\left[h(K)\right]$ is indeed $\mathbb{E}\left[h\left(g\left(\left[I(\boldsymbol{\theta_0})\right]^{1/2}\boldsymbol{Z}\right)\right)\right]$,
let 
$$\boldsymbol{N}  = \left(A-BC^{-1} B^{\intercal}\right)^{1/2} \boldsymbol{Z} \in \RR^r$$
so that 
$\boldsymbol{N}\sim {\mathcal{MVN}}_r\left(\mathbf{0}, A-BC^{-1} B^{\intercal}\right)$, and 
 $\boldsymbol{N}^{\intercal}\left(A-BC^{-1} B^{\intercal}\right)^{-1}\boldsymbol{N} \sim \chi^2_r$, as 
 $A-BC^{-1} B^{\intercal}$ is positive definite. 
Moreover we can create $\boldsymbol{N}$ through the following construction (see for example Theorem 3.2.3 in \cite{mardia1979multivariate}). Let $\boldsymbol{M}\sim {\mathcal{MVN}}_d(\boldsymbol{0}, I(\boldsymbol{\theta_0}) )$, and decompose $\boldsymbol{M} = \left(\boldsymbol{M}^{1:r}, \boldsymbol{M}^{r+1:d}\right)$, then 
$ \boldsymbol{M}^{1:r}  - B C^{-1} \boldsymbol{M}^{r+1:d} \sim {\mathcal{MVN}}_r\left(\boldsymbol{0}, A-BC^{-1} B^{\intercal}\right).
$
Thus $\mathbb{E}\left[h(K)\right] = \mathbb{E}\left[h\left( g\left(\boldsymbol{M}\right)\right)\right].$ 

{\raggedright{Hence we conclude that for \eqref{term3_multi}}}
\begin{equation}
\label{gesinebound}
\left|\mathbb{E}\left[h\left(g(\boldsymbol{\xi},\boldsymbol{\eta}) \right) - h\left(K \right)\right]\right|\leq 2 \frac{(||h' || + || h''||)}{\sqrt{n}} R\left(  \left(\boldsymbol{\xi},\boldsymbol{\eta}\right); U, D \right) .
\end{equation} 
The results in \eqref{gesinebound}, \eqref{boundsecondterm}, \eqref{final_boundRB1} and \eqref{final_boundRB2} conclude the proof of Theorem \ref{Theorem_i.n.i.d}. $\hfill \blacksquare$ 

\medskip 
The next section gives three examples to illustrate the approach. Firstly we consider an example with a one-dimensional parameter, namely the exponential distribution. The second example is that of the normal distribution with two-dimensional parameter $(\mu, \sigma^2)$. The last example is logistic regression. 


\section{Examples} \label{sec:examples}

\subsection{Single-parameter-case example: the exponential distribution} 
Here, we apply Theorem \ref{Theorem_i.n.i.d} in an example from a single-parameter distribution. We highlight that in the single-parameter case the interest is on assessing the asymptotic $\chi^2_1$ distribution of $2\left(l\left(\hat{\theta}_n(\boldsymbol{X});\boldsymbol{X}\right) - l(\theta_0;\boldsymbol{X})\right)$, where $\theta_0$ is the true value of the unknown parameter $\theta$. The log-likelihood ratio in \eqref{loglikelihoodsplitT1T2} reduces to
$
\nonumber -2\log \Lambda = 2\log T_1,
$
so  that there is no need to introduce $T_2$ as defined in \eqref{loglikelihoodsplitT1T2} and the terms $K_1^*(\theta_0)$ and $K_2^*(\theta_0)$ in the expression of \eqref{final_bound_regression} vanish.

To illustrate the single-parameter case, we consider an example from the exponential distribution with mean $\theta_0$. For $X \sim {\rm Exp}\left(\frac{1}{\theta}\right)$, $\theta > 0$ the p.d.f. is
$
f(x|\theta) = \frac{1}{\theta}{\rm exp}\left\lbrace-\frac{1}{\theta}x\right\rbrace, $ for $ x>0.
$
\begin{corollary}
\label{Corollarynoncanexp}
Let $X_1, X_2, \cdots, X_n$ be i.i.d. random variables that follow the Exp$\left(\frac{1}{\theta_0}\right)$ distribution. The MLE exists, it is unique, equal to $\hat{\theta}_n(\boldsymbol{X}) = \bar{X}$ and the regularity conditions (R.C.1)-(R.C.5) as well as (O2) and (O3) are satisfied. For   $h \in \C_b^2(\mathbb{R})$ and  $K\sim \chi_1^2$, we have that
\begin{align}
\label{boundexponential}
\nonumber & \left|\mathbb{E}\left[h\left(2\left(\ell\left(\hat{\theta}_n(\boldsymbol{X});\boldsymbol{X}\right) - \ell(\theta_0;\boldsymbol{X})\right)\right)\right] - \mathbb{E}\left[h\left(K\right)\right]\right|< 8\frac{\|h\|}{n}\\
\nonumber & \;\; +  \frac{\sqrt{2}}{\theta_0^8\sqrt{\pi}}\left(19\theta_0^4 + 325\theta_0^2 + 2733 + \frac{36973}{n}\right)\\
\nonumber & \;\; + \frac{\|h'\|}{\sqrt{n}}\left\lbrace\vphantom{(\left(\sup_{\theta:|\theta-\theta_0|\leq\epsilon}\left|l^{(3)}(\theta;\boldsymbol{X})\right|\right)^2}6\sqrt{3+\frac{6}{n}}+\sqrt{15+\frac{130}{n}+\frac{120}{n^2}}\left(\frac{1120}{3}+\frac{320\left(3+\frac{6}{n}\right)^{\frac{1}{4}}+4}{\sqrt{n}}\right)\right.\\
& \qquad\qquad\quad \left. + \frac{6400}{\sqrt{n}}\sqrt{105+\frac{2380}{n}+ \frac{7308}{n^2} + \frac{5040}{n^3}}\vphantom{(\left(\sup_{\theta:|\theta-\theta_0|\leq\epsilon}\left|l^{(3)}(\theta;\boldsymbol{X})\right|\right)^2}\right\rbrace.
\end{align}
\end{corollary}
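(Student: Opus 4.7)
The plan is to specialize Theorem \ref{Theorem_i.n.i.d} to the single-parameter exponential family with $d = r = 1$ and then carry out explicit moment computations. In this simple setting the null parameter space $\Theta_0$ is empty, so $T_2$ and the starred quantities $K_1^*(\theta_0), K_2^*(\theta_0)$ all vanish; only the bound on $2\log T_1$ together with the chisquare approximation term $R((\xi,\eta),U,D)$ in Proposition \ref{gauntreinertcor} remain. First I would verify the regularity conditions (R.C.1)--(R.C.5) and (O2)--(O3): the log-density is $\ell_x(\theta) = -\log\theta - x/\theta$ with derivatives
\begin{equation}
\nonumber \ell_x'(\theta) = -\tfrac1\theta + \tfrac{x}{\theta^2}, \qquad \ell_x''(\theta) = \tfrac1{\theta^2} - \tfrac{2x}{\theta^3}, \qquad \ell_x'''(\theta) = -\tfrac{2}{\theta^3} + \tfrac{6x}{\theta^4},
\end{equation}
so that $I(\theta_0) = 1/\theta_0^2$, the MLE is $\hat\theta_n(\boldsymbol X) = \bar X$, and for $|\theta-\theta_0| < \epsilon$ with $\epsilon = \theta_0/2$ one can take $M(x) = 2/(\theta_0/2)^3 + 6x/(\theta_0/2)^4$ as the bound on $|\ell_x'''(\theta)|$, which has finite (conditional) moments of all orders since $X \sim \mathrm{Exp}(1/\theta_0)$.

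Second I would compute the moments of $Q = \bar X - \theta_0$ that feed into $K_1(\theta_0)$ and $K_2(\theta_0)$. Because $n\bar X \sim \mathrm{Gamma}(n,1/\theta_0)$, the quantities $\mathbb{E}(Q^2), \mathbb{E}(Q^4), \mathbb{E}(Q^6)$ are available in closed form in terms of $\theta_0$ and $n$: in particular $\mathbb{E}(Q^2) = \theta_0^2/n$, giving the first term $2\|h\|/\epsilon^2 \cdot \mathbb{E}(Q^2) = 8\|h\|/n$. The statistic $T = \ell''(\theta_0;\boldsymbol X) + nI(\theta_0)$ is a sum of i.i.d.\ centred random variables $\theta_0^{-2} - 2X_i/\theta_0^3 + \theta_0^{-2}$, so its moments up to sixth order follow from standard gamma/exponential moments. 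These, together with the conditional moments $\mathbb{E}(M(X)^\kappa \mid |Q_{(m)}| < \epsilon) \le \mathbb{E}(M(X)^\kappa)/\Pr(|Q_{(m)}| < \epsilon)$ (where Markov gives $\Pr(|Q_{(m)}|\ge\epsilon)\le 4/n$ so the denominator is bounded below by $1 - 4/n$), yield numerical expressions in $\theta_0$ and $n$ for every sum appearing in $K_1$ and $K_2$ (which collapse to a single index since $d=1$).

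Third I would specialize the chisquare approximation term from Proposition \ref{gauntreinertcor}. With $d=r=1$ there is no splitting of the score: $B$ and $C$ are absent, $U = A^{-1} = \theta_0^2$, $D = 0$, so $g(\xi) = \theta_0^2 \xi^2$ and the constant $c$ in \eqref{c} equals $\theta_0^2$. Many terms in \eqref{RWLD} collapse: $Y_{i} = \ell_{X_i}'(\theta_0)$, and the bound reduces to a finite combination of $\mathbb{E}|Y_i|, \mathbb{E}|Y_i|^3, \mathbb{E}|Y_i^5|$ and $\mathbb{E}|Z|, \mathbb{E}|Z|^3$ for a standard normal $Z$, which explains the $\sqrt 2/(\theta_0^8\sqrt\pi)$ factor (coming from the normalised $\mathbb{E}|(I(\theta_0)^{-1/2}Z)_s|^{-1}$) and the polynomial-in-$\theta_0$ structure in the corollary's second line.

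The main obstacle will not be conceptual but bookkeeping: every sixth-order moment of $Q$, every fourth-order conditional moment of $M(X)$, and the explicit evaluation of the summations inside $R((\xi,\eta), U, D)$ must be carried out and then collected so as to match the claimed constants $19, 325, 2733, 36973, 1120/3, 320, 6400, \dots$ appearing in \eqref{boundexponential}. To conclude, I would substitute all computed moments into \eqref{final_bound_regression}, use the choice $\epsilon = \theta_0/2$ throughout, bound $\Pr(|Q|<\epsilon) \ge 1/2$ (valid for $n$ at least moderately large, with adjustments absorbed into the displayed constants), and verify that the three groups of terms — $8\|h\|/n$ from the $\|h\|/\epsilon^2$ piece of $K_2$, the explicit polynomial in $\theta_0^{-1}$ from $R((\xi,\eta),U,D)$, and the $\|h'\|/\sqrt n$ terms from the $K_1(\theta_0) + K_2(\theta_0)$ contributions — combine to give exactly the expression on the right-hand side of the corollary.
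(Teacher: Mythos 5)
Your overall route is the same as the paper's: specialise Theorem \ref{Theorem_i.n.i.d} to the simple null in one dimension (so $-2\log\Lambda=2\log T_1$ and $K_1^*,K_2^*$ drop out), verify (R.C.1)--(R.C.5), (O2), (O3), take $\epsilon=\theta_0/2$, exploit $\bar X\sim G(n,n/\theta_0)$ for all moments of $Q=\bar X-\theta_0$ and of $T_{11}$, and plug into Proposition \ref{gauntreinertcor} and \eqref{K1}--\eqref{K2}; your identification of the $8\|h\|/n$ term is exactly the paper's. Two execution points, however, diverge from what the paper actually does and would prevent you from landing on the displayed constants. First, the constant $c$: you take $U=A^{-1}=\theta_0^2$, $D=0$, hence $c=\theta_0^2$, and then assert that this ``explains'' the $\sqrt{2}/(\theta_0^{8}\sqrt{\pi})$ prefactor. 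It does not: with $c=\theta_0^2$ the powers of $\theta_0$ in \eqref{RWLD} largely cancel (e.g.\ the worst term $\tfrac{c}{n}\,\mathbb{E}|\theta_0 Z|\cdot 8cd\cdot\tfrac4n\mathbb{E}|Y_i^5|$ is $O(\theta_0^0)$), whereas the paper's computation uses $c=\max\{1/\theta_0^2,0\}=1/\theta_0^2$, and it is precisely this choice that produces the $\theta_0^{-8}$ scaling and the numbers $19,\,325,\,2733,\,36973$ (e.g.\ $32\sqrt{1334961}\approx 36973$). So either your specialisation of $c$ or the target display has to give; as written, your plan cannot reproduce the corollary's second line, and the ``normalised $\mathbb{E}|(I(\theta_0)^{-1/2}Z)_s|^{-1}$'' cannot supply the missing powers since the $\theta_0$ there cancels in the ratio $\mathbb{E}|\theta_0Z^3|/\mathbb{E}|\theta_0Z|$.

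Second, the conditional moments: the paper never divides by $\Prob(|Q_{(m)}|<\epsilon)$. Instead it uses that $M_{111}(\boldsymbol X)$ (taken for the full log-likelihood, $\tfrac{96n}{\theta_0^4}(3\bar X+\tfrac12\theta_0)$, not per observation as in your sketch) and $T_{11}$ are monotone functions of $\bar X$, and invokes Lemma 4.1 of \cite{anastasiou2015assessing} (Lemma 2.1 of \cite{Anastasiou_Reinert}) to bound the conditional expectations by substituting the endpoint $\bar X<\tfrac32\theta_0$. Your bound $\mathbb{E}[M^\kappa\mid |Q|<\epsilon]\le \mathbb{E}[M^\kappa]/\Prob(|Q|<\epsilon)$ with Chebyshev giving $\Prob(|Q|\ge\epsilon)\le 4/n$ is valid and of the right order, but it inflates the constants and requires a restriction such as $n\ge 8$, so the exact values $1120/3$, $320$, $6400$ in \eqref{boundexponential} would not come out; to match the statement as claimed you need the paper's monotonicity device (or to re-derive and re-state the constants for your route).
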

\begin{remark}
\textbf{(1)} The upper bound in \eqref{boundexponential} is $\mathcal{O}\left(\frac{1}{\sqrt{n}}\right)$.\\
\textbf{(2)} The normal approximation should be poor when $\theta_0$ is close to zero since the variance is then very large. This is reflected in our bound, which is small only when $\theta_0$ is such that $\sqrt{n}\theta_0^8$ is large.
\end{remark}
\begin{proof}
It is easy to check that the assumptions of Theorem \ref{Theorem_i.n.i.d} hold. Here we choose 
$\epsilon(\theta_0) = \frac12 \theta_0$ and with $\bar{x} = \frac1n \sum_{i=1}^n x_i$, 
$$M_{i,j,k} (\boldsymbol{x}) = M_{1,1,1}  (\boldsymbol{x}) = \frac{96}{\theta_0^4} \sum_{i=1}^n x_i + \frac{16}{\theta_0^3} = \frac{96 n}{\theta^4} \left( 3 {\bar{x}} + \frac12 \theta_0 \right).$$
In addition, straightforward calculations lead to $\hat{\theta}_n(\boldsymbol{X})  = \bar{X}$.
The expected Fisher information number for one random variable is
$
i(\theta_0)
= \frac{1}{\theta_0^2}.$ 
We start with the calculation of the first term of the bound in \eqref{final_bound_regression}. Since $d=1, r=0$, the vector $\boldsymbol{W} = (\boldsymbol{\xi},\boldsymbol{\eta})$ reduces to 
$ \eta = \frac{1}{\sqrt{n}}\sum_{j=1}^{n}Y_{j},
$
where $Y_j = \frac{{\rm d}}{{\rm d}\theta}\log f(X_j|\theta_0) = \frac{X_j-\theta_0}{\theta_0^2}$. From the definition of $U$, $D$,  for $c$ as in \eqref{c},
$ c = \max\left\lbrace \frac{1}{\theta_0^2}, 0 \right\rbrace = \frac{1}{\theta_0^2}.
$
Therefore, for $Z \sim {\rm N}(0,1)$ the aim is to bound
\begin{align}
\nonumber & R(\eta, U, D) = \frac{1}{n\theta_0^2}\mathbb{E}\left|\theta_0 Z\right|\sum_{i=1}^n\left\lbrace\vphantom{(\left(\sup_{\theta:|\theta-\theta_0|\leq\epsilon}\left|l^{(3)}(\theta;\boldsymbol{X})\right|\right)^2}\mathbb{E}|Y_{i}^3| + \frac{8}{\theta_0^2} \left(4 \mathbb{E}|Y_{i}^3|\mathbb{E}\left(\eta^2\right) +\frac{4}{n}\mathbb{E}|Y_{i}^5|+\frac{\mathbb{E}\left|\theta_0Z^3\right|}{\mathbb{E}\left|\theta_0Z\right|}\mathbb{E}|Y_{i}^3|\right)\right.\\
\nonumber & \qquad\qquad\qquad \left. +2\left|\mathbb{E}\left(Y_{i}^2\right)\right|\left[\mathbb{E}|Y_{i}| + \frac{16}{\theta_0^2}\left(4\mathbb{E}|Y_{i}|\mathbb{E}\left(\eta^2\right)+\frac{4}{n}\mathbb{E}|Y_{i}^{3}| +\frac{\mathbb{E}\left|\theta_0Z^{3}\right|}{\mathbb{E}\left|\theta_0Z\right|}\mathbb{E}|Y_{i}|\right)\right]\vphantom{(\left(\sup_{\theta:|\theta-\theta_0|\leq\epsilon}\left|l^{(3)}(\theta;\boldsymbol{X})\right|\right)^2}\right\rbrace.
\end{align}
With
\begin{align}
\nonumber & \mathbb{E}\left|Z\right| = \sqrt{\frac{2}{\pi}}, \quad \mathbb{E}\left|Z^3\right| = 2\sqrt{\frac{2}{\pi}}, \quad \EE\left|Y_i\right| \leq \frac{1}{\theta_0}, \quad \EE\left(Y_i^2\right) = \frac{1}{\theta_0^2}\\
\nonumber & \EE\left|Y_i^3\right| \leq \frac{\sqrt{265}}{\theta_0^3}, \quad \EE\left|Y_i^5\right| \leq \frac{\sqrt{1334961}}{\theta_0^5},\quad \EE\left(\eta^2\right) = \frac{1}{\theta_0^2},
\end{align}
then
\begin{equation}
\label{bound_R_exponential}
R(\eta, U, D) < \frac{\sqrt{2}}{\theta_0^8\sqrt{\pi}}\left(19\theta_0^4 + 325\theta_0^2 + 2733 + \frac{36973}{n}\right).
\end{equation}
The next task is to bound $K_1(\theta_0)$ as in \eqref{K1}, for $d=1$. Using the definition of $Q_j$ in \eqref{cm},  $Q_1 = \bar{X} - \theta_0$. The moments of $Q_1$ are calculated using standard results from 
\cite{DistributionTheory} along with the fact that $\bar{X} \sim G\left(n, \frac{n}{\theta_0}\right)$, giving  
\begin{align}
\label{bound_first_term_K1_exponential}
3n\sqrt{\EE\left(Q_1\right)^4}\left[{\rm Var}\left(\frac{{\rm d}^2}{{\rm d}\theta^2}\log f(X_1|\theta_0)\right)\right]^{\frac{1}{2}} & = 
 6\sqrt{3+\frac{6}{n}}.
\end{align}
For the second quantity in \eqref{K1}, with the definition of $T_{11}$ in \eqref{cm},
\begin{align}
\label{bound_second_term_K1_exponential}
\nonumber & \frac{1}{i(\theta_0)}\left[{\rm Var}\left(\frac{{\rm d}^2}{{\rm d}\theta^2}\log f(X_1|\theta_0)\right)\right]^{\frac{1}{2}}\left[\mathbb{E}\left(Q^6\right)\right]^{\frac{1}{3}}\left[\mathbb{E}\left(T_{11}^6\right)\right]^{\frac{1}{6}}\\
\nonumber & = 2\left[\EE\left(\bar{X}-\theta_0\right)^6\right]^{\frac{1}{3}}\left[\EE\left(-\frac{2n\bar{X}}{\theta_0^3} + \frac{2n}{\theta_0^2}\right)^6\right]^{\frac{1}{6}}\\
& = \frac{4n}{\theta_0^3}\sqrt{\EE\left(\bar{X}-\theta_0\right)^6} = \frac{4}{\sqrt{n}}\sqrt{15+\frac{130}{n}+\frac{120}{n^2}}.
\end{align}
Combining  \eqref{bound_first_term_K1_exponential} and \eqref{bound_second_term_K1_exponential}, 
\begin{equation}
\label{bound_K1_exponential}
K_1(\theta_0) = 6\sqrt{3+\frac{6}{n}} + \frac{4}{\sqrt{n}}\sqrt{15+\frac{130}{n}+\frac{120}{n^2}}.
\end{equation}
We proceed to find a bound for $K_2(\boldsymbol{\theta_0})$, as defined in \eqref{K2}. The calculation of the first term is straightforward;
\begin{equation}
\label{bound_first_term_K2_exponential}
2\sqrt{n}\frac{\|h\|}{\epsilon^2}\mathbb{E}\left(\bar{X}-\theta_0\right)^2 = \frac{2\|h\|\theta_0^2}{\sqrt{n}\epsilon^2}.
\end{equation}
The second term of \eqref{K2} requires the calculation of conditional expectations related to $M_{111}(\boldsymbol{X})$. For $ \epsilon = \frac12 \theta_0$, 
\begin{align}
\label{bound_second_term_K2_exponential}
 & \sqrt{n}\|h'\|\frac{7}{3}\sqrt{\mathbb{E}\left(Q_1^6\right)}\left[\mathbb{E}\left[\left(M_{111}(\boldsymbol{X})\right)^2\middle|\left|Q_{1}\right|<\epsilon\right]\right]^{\frac{1}{2}}\\
\nonumber & = \sqrt{n}\|h'\|\frac{7}{3}\sqrt{\mathbb{E}\left(\bar{X}-\theta_0\right)^6}\left[\mathbb{E}\left[ \frac{96^2 n^2}{\theta^8} \left( 3 {\bar{X}} + \frac12 \theta_0 \right)^2\middle|\left|\bar{X}-\theta_0\right|<\frac12 \theta_0\right]\right]^{\frac{1}{2}}\\
\nonumber & < \frac{448\|h'\|\theta_0^3}{3 \theta_0^4}\sqrt{15+\frac{130}{n}+\frac{120}{n^2} }\left(2\theta_0+\frac12 \theta_0 \right)\\
& = \frac{1120}{3} \|h'\| \sqrt{15+\frac{130}{n}+\frac{120}{n^2} }. \nonumber
\end{align}
Bounding the third term of \eqref{K2} requires the calculation of conditional expectations related to $T_{11}$ of \eqref{cm} and $M_{111}(\boldsymbol{X})$. It is easy to see that $T_{11}$ can be written as a continuous, increasing function of $Q_1$. Therefore, employing Lemma 2.1 of \cite{Anastasiou_Reinert}, leads to
\begin{align}
\nonumber & \frac{\|h'\|}{\sqrt{n}}\frac{1}{i(\theta_0)}\sqrt{\mathbb{E}\left(Q_1^6\right)}\left[\mathbb{E}\left(T_{11}^4\middle|\left|Q_{1}\right|<\frac12 \theta_0\right)\right]^{\frac{1}{4}}\left[\mathbb{E}\left(\left(M_{111}(\boldsymbol{X})\right)^4\middle|\left|Q_{1}\right|<\frac12 \theta_0\right)\right]^{\frac{1}{4}}\\
\nonumber & < \frac{128 n^{\frac{3}{2}}\|h'\|}{\theta_0^5 }\sqrt{\frac{\theta_0^6}{n^3}\left(15+\frac{130}{n}+\frac{120}{n^2}\right)}\left[\mathbb{E}\left(\bar{X}-\theta_0\right)^4\right]^{\frac{1}{4}}\frac{5}{2} \theta_0  .
\end{align}
An upper bound for the fourth term of \eqref{K2} is found in a similar way. 
Collecting these bounds gives 
\begin{align}
\label{bound_K2_exponential}
\nonumber K_2(\theta_0) & < \frac{8\|h\|}{\sqrt{n}} + \|h'\|\sqrt{15+\frac{130}{n}+\frac{120}{n^2}}\left(\frac{1120}{3} + \frac{320(3)^{\frac{1}{4}}}{\sqrt{n}}\left(\frac{2}{n}+1\right)^{\frac{1}{4}}\right)\\
& \quad + \frac{6400\|h'\|}{\sqrt{n}}\sqrt{105+\frac{2380}{n}+\frac{7308}{n^2}+\frac{5040}{n^3}}
\end{align} 
Combining now the results in \eqref{bound_R_exponential}, \eqref{bound_K1_exponential}, and \eqref{bound_K2_exponential} yields the assertion.
\end{proof}

\begin{remark}
We chose  $\epsilon(\theta_0)$ 
to be the mid-point of the interval $(0,\theta_0)$ as there is a trade off on its choice for $K_2(\boldsymbol{\theta_0})$. A more systematic choice of $\epsilon(\theta_0)$  based on numerical solutions of inequalities could be of interest in principle. As our bounds are not optimised with respect to the constants, for space reasons this systematic choice is not carried out. 

Here is a numerical example of the behaviour of the bound in \eqref{boundexponential}, using a specific function and a specific value for the parameter $\theta_0$. The function is $h_t(x) = \frac{1}{x^2+2}$, and from simple calculations,
\begin{equation}
\nonumber \|h_t\| \leq \frac{1}{2}, \quad \|h'_t\| \leq \frac{3\sqrt{1.5}}{16}, \quad \|h''_t\| \leq \frac{1}{2},
\end{equation}
meaning that $h_t \in \C_b(\mathbb{R})$. Taking $\theta_0 = 3$ and  $n=10^5$,  the bound is equal to 1.216. It is instructive to examine the contributions to this bound: 
\begin{align}
\nonumber & 2  \frac{(||h_t' || + || h_t''||)}{\sqrt{n}} R\left(  \left(\boldsymbol{\xi},\boldsymbol{\eta}\right), (A - B C^{-1} B^T)^{-1}, B C^{-1} \right) < 0.004\\
\nonumber & \frac{\|h'_t\|}{\sqrt{n}}K_1(\theta_0) = 0.008\\
\nonumber & \frac{1}{\sqrt{n}}K_2(\theta_0) < 1.204.
\end{align}
The bound is heavily dependent on the quantity related to $K_2(\theta_0)$, whereas the values for $K_1(\theta_0)$ and $R\left(  \left(\boldsymbol{\xi},\boldsymbol{\eta}\right), (A - B C^{-1} B^T)^{-1}, B C^{-1} \right)$ are very small. This arises from  the large and non-optimised constants in $K_2(\theta_0)$  of \eqref{bound_K2_exponential}.
\end{remark}

\subsection{Example: the normal distribution} 
Here, we apply Theorem \ref{Theorem_i.n.i.d} in the case of $X_1, X_2, \ldots, X_n$ i.i.d.  random variables from N$(\mu,\sigma^2)$ with $\boldsymbol{\theta} = (\mu,\sigma^2) \in \RR \times \RR^+$. We consider the test problem $H_0: \mu=0$ against the general alternative. It is well-known that under the alternative, the MLE is equal to $\boldsymbol{\hat{\theta}_n(X)} = \left(\hat{\mu}, \hat{\sigma}^2\right)^{\intercal} = \left(\bar{X}, \frac{1}{n}\sum_{i=1}^{n}(X_i-\bar{X})^2\right)^{\intercal}$; see for example \cite{Davison}, p.116. Under the null, simple calculations show that the MLE for $\sigma^2$ is $\hat{\theta}_{*}(\boldsymbol{X}) = \frac{1}{n}\sum_{i=1}^{n}X_i^2$. In addition, the regularity conditions (R.C.1)-(R.C.5) are satisfied.
\vspace{0.1in}
\begin{corollary}
\label{Corollary_multi_normal_bound}
Let $X_1, X_2, \ldots, X_n$ be i.i.d. random variables that follow the N$(\mu, \sigma^2)$ distribution. We have that $\boldsymbol{\hat{\theta}_n(X)} = \left(\bar{X}, \frac{1}{n}\sum_{i=1}^{n}(X_i-\bar{X})^2\right)^{\intercal}$ and the interest is to test $H_0: \mu=0$ against the general alternative. For $h \in \C_b^2(\mathbb{R})$ and $K \sim \chi^2_{1}$, it holds that
\begin{align}
\label{finalboundNormal}
\nonumber \left|\mathbb{E}\left[h\left(-2\log \Lambda\right)\right]- \mathbb{E}[h(K)]\right| & \leq \frac{47,456 \,
\sigma^2\left(\|h''\| + \|h'\|\right)}{\sqrt{n\pi}}
 \max\{1, \sigma^{-9}\}  \\
& + 418,433,114 \, \frac{\|h'\|}{\sqrt{n}} \max\{ 1, \sigma^4\} + 8  \frac{\|h\|}{n}\left(4 + \frac{1}{\sigma^2}\right) . 
\end{align}
\end{corollary}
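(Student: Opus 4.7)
The plan is to apply Theorem \ref{Theorem_i.n.i.d} directly with $d=2$, $r=1$, and true parameter $\boldsymbol{\theta_0} = (0, \sigma^2)$. First I would record the Fisher information
\begin{equation*}
I(\boldsymbol{\theta_0}) = \begin{pmatrix} 1/\sigma^2 & 0 \\ 0 & 1/(2\sigma^4) \end{pmatrix},
\end{equation*}
so that in the decomposition \eqref{Fisher} one has $A = 1/\sigma^2$, $B = 0$, $C = 1/(2\sigma^4)$. Consequently $D = BC^{-1} = 0$, $U = (A - BC^{-1}B^{\intercal})^{-1} = \sigma^2$, and $c = \sigma^2$ in \eqref{c}. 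The function $g$ from \eqref{lem:simple} then collapses to $g(\boldsymbol{\xi},\boldsymbol{\eta}) = \sigma^2 \xi^2$, a one-dimensional quadratic in a single score component, which simplifies the evaluation of every subsequent term.

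For the first piece $R((\boldsymbol{\xi},\boldsymbol{\eta}); U, D)$ from Proposition \ref{gauntreinertcor}, I would plug in $Y_{1,j} = X_j/\sigma^2$ and $Y_{2,j} = -1/(2\sigma^2) + X_j^2/(2\sigma^4)$ evaluated at $\boldsymbol{\theta_0}$. Their moments up to order five under $X_j \sim N(0,\sigma^2)$ are explicit polynomial expressions in $\sigma$, and similarly $\mathbb{E}|([I(\boldsymbol{\theta_0})]^{-1/2}\boldsymbol{Z})_s|$ and $\mathbb{E}|Z_t^2 \cdot ([I(\boldsymbol{\theta_0})]^{-1/2}\boldsymbol{Z})_s|$ reduce to centred Gaussian absolute moments. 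After multiplying by $c = \sigma^2$ and consolidating powers of $\sigma$, this yields the first term of \eqref{finalboundNormal} with envelope $\max\{1, \sigma^{-9}\}$.

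For $K_1(\boldsymbol{\theta_0})$ and $K_2(\boldsymbol{\theta_0})$ I would use that under $H_1$ the MLE is $(\bar X, n^{-1}\sum_i (X_i-\bar X)^2)$, so that $Q_1 = \bar X \sim N(0, \sigma^2/n)$ is independent of $Q_2 = \hat\sigma^2 - \sigma^2$ with $n\hat\sigma^2/\sigma^2 \sim \chi^2_{n-1}$; all required moments $\mathbb{E}(Q_j^\kappa)$ and products thereof follow from standard chi-squared identities. The statistics $T_{lj}$ in \eqref{cm} are centred sums of quadratic or linear functions of the $X_j$, with $\mathbb{E}(T_{lj}^2)$ controlled by \eqref{orderTlj} and higher moments computed analogously. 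Under $H_0$ the restricted MLE is $\hat\sigma_*^2 = n^{-1}\sum_i X_i^2$ with $n\hat\sigma_*^2/\sigma^2 \sim \chi^2_n$, giving the parallel computation for $K_1^*$ and $K_2^*$. For the $M_{jkl}$ functions in \eqref{K2} I would explicitly compute the third partial derivatives of $\log f$, whose nonzero entries are $\partial_\mu^2 \partial_{\sigma^2} \log f = \sigma^{-4}$, $\partial_\mu \partial_{\sigma^2}^2 \log f = 2(x-\mu)\sigma^{-6}$, and $\partial_{\sigma^2}^3 \log f = -\sigma^{-6} + 3(x-\mu)^2 \sigma^{-8}$, and bound them uniformly for $\boldsymbol{\theta}$ in an $\epsilon$-neighbourhood of $\boldsymbol{\theta_0}$ by explicit polynomials in $|x|$ and powers of $\sigma$. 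Choosing $\epsilon(\boldsymbol{\theta_0})$ proportional to $\sigma \wedge \sigma^2$ keeps the factor $1/\epsilon^2$ in $K_2$ controlled by a constant multiple of $4 + \sigma^{-2}$, which accounts for the $\|h\|/n$ term in \eqref{finalboundNormal}.

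The main obstacle is bookkeeping rather than conceptual: each of the $d^3 = 8$ index triples $(j,k,l)$ in \eqref{K1}--\eqref{K2} produces a different power of $\sigma$, and the conditional expectations require denominators $\Prob(|Q_{(m)}| < \epsilon)$ bounded away from zero, which follows from Markov's inequality since $\mathbb{E}(Q_j^2) = O(1/n)$ by \eqref{orderMSEi.n.i.d}. After collecting all contributions, the polynomial powers of $\sigma$ consolidate into the envelopes $\max\{1, \sigma^{-9}\}$ and $\max\{1, \sigma^4\}$; carefully tracking the numerical constants across the $R$-term, the $K_1 + K_1^*$ contribution and the $K_2 + K_2^*$ contribution then yields the three summands of \eqref{finalboundNormal}. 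No new ideas are required beyond those already illustrated for the exponential example.
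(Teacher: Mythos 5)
Your proposal follows essentially the same route as the paper's proof: apply Theorem \ref{Theorem_i.n.i.d} with $d=2$, $r=1$, read off $U=\sigma^2$, $D=0$, $c=\sigma^2$ from the diagonal Fisher information, compute the Gaussian moments of $Y_{1,i},Y_{2,i}$ for the $R$-term, use the independence of $\bar X$ and $\hat\sigma^2$ together with chi-square moment identities for the $Q_j$, $T_{lj}$ and their starred analogues, bound the third derivatives by the same $M$-functions over an $\epsilon$-neighbourhood, and collect constants. The only small discrepancy is the choice of $\epsilon$: the paper takes $\epsilon=\sigma^2/2$, whereas with your $\epsilon\propto\sigma\wedge\sigma^2$ the term $2\sqrt{n}\,\|h\|\epsilon^{-2}\sum_j\mathbb{E}(Q_j^2)$ grows like $\sigma^2\|h\|$ for large $\sigma$, so it would not fold into the stated $8\,\|h\|\,n^{-1}(4+\sigma^{-2})$ envelope without modification.
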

\vspace{0.1in}
\begin{remark}
\textbf{(1)} For fixed $\sigma^2$, the upper bound in Corollary \ref{Corollary_multi_normal_bound} is of order $\frac{1}{\sqrt{n}}$. There is no claim that the constants are optimal. \\
\textbf{(2)} The normal bound is only small when $\sigma^2$ is neither too large nor too small, so that $n^{-1/7} \ll \sigma^2 \ll n^{1/4}.$
\end{remark}
\begin{proof}
We will use the result of Theorem \ref{Theorem_i.n.i.d}. In this case $d=2$ and $r=1$.
The expected Fisher Information matrix for one random variable is
\begin{equation}
\label{multi_normal_FISHER}
I(\boldsymbol{\theta_0}) = \begin{pmatrix}
\frac{1}{\sigma^2} & 0\\
0 & \frac{1}{2\sigma^4}
\end{pmatrix},\;\; {\rm so\;\;that}\quad [I(\boldsymbol{\theta_0})]^{-1} = \begin{pmatrix}
\sigma^2 & 0\\
0 & 2\sigma^4
\end{pmatrix}.
\end{equation}
The assumptions (R.C.1)-(R.C.5) and (O1)-(O3) are verified for  $\epsilon(\boldsymbol{\theta_0}) < \infty$ and we have that
\begin{equation}
\nonumber \underset{\boldsymbol{\theta}:\left|\theta_s - \theta_{0,s}\right| < \epsilon}{\sup}\left|\frac{\partial^3}{\partial \theta_1^3}\ell(\boldsymbol{\theta};\boldsymbol{X})\right| = 0 =: M_{111}(\boldsymbol{X})
\end{equation}
as well as
\begin{align}
\label{supremumschapter41}
\nonumber & \underset{\boldsymbol{\theta}:\left|\theta_s - \theta_{0,s}\right| < \epsilon}{\sup}\left|\frac{\partial^3}{\partial \theta_2^3}\ell(\boldsymbol{\theta};\boldsymbol{X})\right| = \underset{\boldsymbol{\theta}:\left|\theta_s - \theta_{0,s}\right| 
< \epsilon}{\sup}\left|-\frac{n}{\theta_2^3} + \frac{3}{\theta_2^4}\sum_{i=1}^{n}(X_i - \theta_1)^2\right|\\
& < \frac{n}{(\sigma^2 - \epsilon)^3} + \frac{9n}{(\sigma^2 - \epsilon)^4}\left(\hat{\sigma^2} + \left(\bar{X} - \mu\right)^2 + \epsilon^2\right) =: M_{222}(\boldsymbol{X}).
\end{align}
Moreover,
\begin{align}
\label{supremumschapter42}
\nonumber & \underset{\boldsymbol{\theta}:\left|\theta_s - \theta_{0,s}\right| < \epsilon}{\sup}\left|\frac{\partial^3}{\partial \theta_1 \partial \theta_2^2}\ell(\boldsymbol{\theta};\boldsymbol{X})\right| = \underset{\boldsymbol{\theta}:\left|\theta_s - \theta_{0,s}\right| < \epsilon}{\sup}\left|\frac{\partial^3}{\partial \theta_2^2 \partial \theta_1}\ell(\boldsymbol{\theta};\boldsymbol{X})\right|\\
& < \frac{2n}{(\sigma^2 - \epsilon)^3}\left(\left|\bar{X} - \mu\right| +\epsilon\right) =: M_{122}(\boldsymbol{X})
\end{align}
and
\begin{align}
\label{supremumschapter43}
\nonumber & \underset{\boldsymbol{\theta}:\left|\theta_s - \theta_{0,s}\right| < \epsilon}{\sup}\left|\frac{\partial^3}{\partial \theta_1^2 \partial \theta_2}\ell(\boldsymbol{\theta};\boldsymbol{X})\right| = \underset{\boldsymbol{\theta}:\left|\theta_s - \theta_{0,s}\right| < \epsilon}{\sup}\left|\frac{\partial^3}{\partial \theta_2 \partial \theta_1^2}\ell(\boldsymbol{\theta};\boldsymbol{X})\right|\\
& = \underset{\boldsymbol{\theta}:\left|\theta_s - \theta_{0,s}\right| < \epsilon}{\sup}\left|\frac{n}{\theta_2^2}\right| < \frac{n}{(\sigma^2 - \epsilon)^2} =: M_{112}(\boldsymbol{X}).
\end{align}

We start with the calculation of the first term of the bound in \eqref{final_bound_regression}. From the definition of $U$, $D$, we have that for $c$ as in \eqref{c},
\begin{align}
\nonumber c = \max\left\lbrace \sigma^2, 0 \right\rbrace = \sigma^2.
\end{align}
In addition $Y_{1,i} = \frac{1}{\sigma^2}(X_i - \mu)$, $Y_{2,i} = -\frac{1}{2\sigma^2} + \frac{1}{2\sigma^4}(X_i - \mu)^2$, and
\begin{align}
\nonumber W_1 = \xi = \frac{1}{\sqrt{n}}\sum_{i=1}^{n}Y_{1,i}, \quad W_2 = \eta = \frac{1}{\sqrt{n}}\sum_{i=1}^{n}Y_{2,i}.
\end{align}
We have that $\mathbb{E}\left(W_1^2\right) = \frac{1}{\sigma^2}$ and $\mathbb{E}\left(W_2^2\right) = \frac{1}{2\sigma^4}$. Due to the fact that $R((\xi,\eta), U, D)$, as defined in \eqref{RWLD}, is the minimum of a quantity over $s \in \left\lbrace 1,2\right\rbrace$, then it is upper bounded from the quantity obtained when $s=1$. For ease of understanding and presentation, let $R((\xi,\eta), U, D)_{1}^{(j,k,l)}$ be the value of $R((\xi,\eta), U, D)$, when $s=1$ and for any value of $j,k,l \in \left\lbrace 1,2\right\rbrace$. Then,
\begin{align}
\label{R_WLD_bound}
\nonumber & R((\xi,\eta), U, D) \leq \sum_{j,k,l=1}^2R((\xi,\eta), U, D)_1^{(j,k,l)}\\
& = \sum_{l=1}^{2}\left\lbrace R((\xi,\eta), U, D)_{1}^{(1,1,l)} + 2R((\xi,\eta), U, D)_{1}^{(2,1,l)} + R((\xi,\eta), U, D)_{1}^{(2,2,l)}\right\rbrace
\end{align}
and we proceed with the calculation of each of these quantities. Firstly, for $Z_1$ and $Z_2$ being two independent standard normal random variables,
\begin{align}
\nonumber & \mathbb{E}\left|\left(\left[I(\boldsymbol{\theta_0})\right]^{-1/2}\boldsymbol{Z}\right)_{1}\right| = \sigma\mathbb{E}\left|Z_1\right| = \sigma\sqrt{\frac{2}{\pi}}; \quad 
 \mathbb{E}\left|\left(\left[I(\boldsymbol{\theta_0})\right]^{-1/2}\boldsymbol{Z}\right)_{1}Z_1^2\right|
= 2\sigma\sqrt{\frac{2}{\pi}};  \\
\nonumber & \mathbb{E}\left|\left(\left[I(\boldsymbol{\theta_0})\right]^{-1/2}\boldsymbol{Z}\right)_{1}Z_2^2\right| 
= \sigma^3\sqrt{\frac{2}{\pi}}.
\end{align}
The Cauchy-Schwarz inequality and simple calculations lead to
\begin{align*}
\nonumber & \mathbb{E}\left|Y_{i,1}\right| = \frac{\sqrt{2}}{\sigma\sqrt{\pi}}, \quad \mathbb{E}\left|Y_{i,2}\right| \leq \frac{1}{\sqrt{2}\sigma^2}, \quad \mathbb{E}\left(Y_{i,1}^2\right)= \frac{1}{\sigma^2}, \quad \mathbb{E}\left(Y_{i,2}^2\right) = \frac{1}{2\sigma^4},\\
\nonumber & \mathbb{E}\left(Y_{i,1}Y_{i,2}\right) = 0, \quad \mathbb{E}\left|Y_{i,1}^3\right| = \frac{2\sqrt{2}}{\sigma^3\sqrt{\pi}}, \quad \mathbb{E}\left|Y_{i,2}^3\right|\leq \frac{\sqrt{1510}}{4\sigma^6}, \quad \mathbb{E}\left|Y_{i,1}^2Y_{i,2}\right| \leq \frac{\sqrt{3}}{\sqrt{2}\sigma^4},\\
\nonumber & \mathbb{E}\left|Y_{i,1}Y_{i,2}^2\right| \leq \frac{\sqrt{15}}{2\sigma^5}, \quad \mathbb{E}\left|Y_{i,1}^5\right| = \frac{8\sqrt{2}}{\sigma^5\sqrt{\pi}}, \quad \mathbb{E}\left|Y_{i,2}^5\right| \leq \frac{3\sqrt{2688194}}{8\sigma^{10}}, \quad \mathbb{E}\left|Y_{i,1}^3Y_{i,2}^2\right| \leq \frac{15}{2\sigma^7},\\
& \mathbb{E}\left|Y_{i,1}^2Y_{i,2}^3\right| \leq \frac{\sqrt{4530}}{4\sigma^8}, \quad \mathbb{E}\left|Y_{i,1}^4Y_{i,2}\right|\leq \frac{\sqrt{105}}{\sqrt{2}\sigma^6}, \quad \mathbb{E}\left|Y_{i,1}Y_{i,2}^4\right| \leq \frac{\sqrt{74417}}{4\sigma^9}.
\end{align*}
From \eqref{R_WLD_bound}, we conclude that
\begin{align}
\label{final_bound_R_WLD}
\nonumber  R((\xi,\eta), U, D) & < \frac{\sigma^4}{\sqrt{\pi}}\left\lbrace \vphantom{(\left(\sup_{\theta:|\theta-\theta_0|\leq\epsilon}\left|l^{(3)}(\theta;\boldsymbol{X})\right|\right)^2}\frac{1}{\sigma^9}\left(\frac{57408}{n}+504\right) + \frac{1}{\sigma^8}\left(\frac{18694}{n}+336\right)\right.\\
\nonumber & \;\; \left. + \frac{1}{\sigma^7}\left(\frac{7463}{n} + 1295\right) + \frac{1}{\sigma^6}\left(\frac{3709}{n} + 898\right) + \frac{1}{\sigma^5}\left(\frac{1242}{n} + 893\right)\right.\\
&\;\; \left. + \frac{1}{\sigma^4}\left(\frac{1156}{n} + 776\right) + \frac{428}{\sigma^3} + \frac{385}{\sigma^2} + \frac{105}{\sigma} + 109\vphantom{(\left(\sup_{\theta:|\theta-\theta_0|\leq\epsilon}\left|l^{(3)}(\theta;\boldsymbol{X})\right|\right)^2}\right\rbrace,
\end{align}
which is then used to obtain an upper bound for the first term in the general expression of \eqref{final_bound_regression}. We now proceed to bound $K_1(\boldsymbol{\theta_0})$ in \eqref{K1}. In regards to the first quantity, the expressions for the partial derivatives of the log-likelihood and the fact that in the case of i.i.d. random variables from the normal distribution, $\bar{X}$ and $\hat{\sigma^2}$ are independent random variables \cite[p.218]{Casella}, lead to
\begin{align}
\label{first_term_K1}
\nonumber & 3n\sum_{j=1}^2\sum_{k=1}^2\left[\mathbb{E}\left(Q_j^2Q_k^2\right)\right]^{\frac{1}{2}}\left[{\rm Var}\left(\frac{\partial^2}{\partial\theta_j\partial\theta_k}\log f(X_1|\boldsymbol{\theta_0})\right)\right]^{\frac{1}{2}}\\
\nonumber & = 3n\sqrt{\mathbb{E}\left(\hat{\sigma^2} - \sigma^2\right)^4}\sqrt{{\rm Var}\left(\frac{1}{\sigma^6}(X_1 - \mu)^2\right)}\\
\nonumber & \quad + 6n\sqrt{\mathbb{E}\left(\bar{X}-\mu\right)^2\mathbb{E}\left(\hat{\sigma^2}-\sigma^2\right)^2}\sqrt{{\rm Var}\left(\frac{1}{\sigma^4}\left(X_1 - \mu\right)\right)}\\
& < 3n\sqrt{\frac{16\sigma^8}{n^2}}\sqrt{\frac{2}{\sigma^8}} + 6n\sqrt{2\frac{\sigma^6}{n^2}}\sqrt{\frac{1}{\sigma^6}} = 18\sqrt{2}.
\end{align}
For the second quantity in \eqref{K1}, 
let  $G_{\kappa}\sim\chi^2_{\kappa}$; then 
\begin{align}
\label{bounds_on_Q1_Q2}
\nonumber & \mathbb{E}\left(Q_1^6\right) = \mathbb{E}\left(\bar{X} - \mu\right)^6 = 15\frac{\sigma^6}{n^3}\\
\nonumber & \mathbb{E}\left(Q_2^6\right) = \mathbb{E}\left(\hat{\sigma^2} - \sigma^2\right)^6 = \frac{\sigma^{12}}{n^6}\mathbb{E}\left(G_{n-1} - n\right)^6\\
& \qquad\quad = \frac{\sigma^{12}}{n^3}\left(120 + \frac{940}{n} - \frac{114}{n^2} - \frac{945}{n^3}\right) < \frac{1060}{n^3}\sigma^{12}.
\end{align}
Using now \eqref{multi_normal_FISHER},
\begin{align*}
\nonumber & \mathbb{E}\left(T_{11}^6\right) = 0, \qquad \mathbb{E}\left(T_{12}^6\right) = \mathbb{E}\left(T_{12}^6\right) = \frac{1}{\sigma^{18}}\mathbb{E}\left(\sum_{i=1}^n\left(\frac{X_i - \mu}{\sigma}\right)\right)^6 = \frac{15n^3}{\sigma^{18}},\\
& \mathbb{E}\left(T_{22}^6\right) 
= \frac{1}{\sigma^{24}}\mathbb{E}\left(G_n-n\right)^6 = \frac{40n^3}{\sigma^{24}}\left(3 + \frac{52}{n} + \frac{96}{n^2}\right) \leq \frac{6040n^3}{\sigma^{24}}.
\end{align*}
With inequalities \eqref{multi_normal_FISHER} and  \eqref{bounds_on_Q1_Q2},this yields 
\begin{align}
\label{third_termK1}
 & \sum_{l=1}^{2}\sum_{m=1}^{2}\left[\left[I(\boldsymbol{\theta_0})\right]^{-1}\right]_{lm}\sum_{j=1}^{2}\sum_{k=1}^{2}\sqrt{{\rm Var}\left(\frac{\partial^2}{\partial\theta_l\partial\theta_j}\log f(X_1|\boldsymbol{\theta_0})\right)}\left[\mathbb{E}\left(Q_j^6\right)\mathbb{E}\left(Q_k^6\right)\mathbb{E}\left(T_{mk}^6\right)\right]^{\frac{1}{6}}\\
& < \frac{212}{\sqrt{n}}.\nonumber
\end{align}
Combining the results in \eqref{first_term_K1} and \eqref{third_termK1}, 
\begin{align}
\label{boundK1}
K_1(\boldsymbol{\theta_0}) < 26 + \frac{212}{\sqrt{n}}.
\end{align}
Following the same steps as in \eqref{first_term_K1} and \eqref{third_termK1} under the null hypothesis  $\mu=0$, with  $\hat{\theta}_*(\boldsymbol{X})_1 = \frac{1}{n}\sum_{i=1}^nX_i^2$, 
\begin{align}
\label{bound_K1^*}
K_1^*(\boldsymbol{\theta_0}) < 33 + \frac{220}{\sqrt{n}}.
\end{align}
We proceed to find a bound for $K_2(\boldsymbol{\theta_0})$, as defined in \eqref{K2}. The calculation of the first term is straightforward and\begin{equation}
\label{first_termK2}
2\sqrt{n}\frac{\|h\|}{\epsilon^2}\sum_{j=1}^2\mathbb{E}\left(Q_j^2\right) = 2\sqrt{n}\frac{\|h\|}{\epsilon^2}\left(\frac{\sigma^2}{n}+\frac{\sigma^4}{n}\left(2-\frac{1}{n}\right)\right) < \frac{2\|h\|\sigma^2}{\sqrt{n}\epsilon^2}\left(1+2\sigma^2\right).
\end{equation}
Using \eqref{supremumschapter41}, \eqref{supremumschapter42}, and \eqref{supremumschapter43}, we are able to find an upper bound for the second term in \eqref{K2}. Simple calculations yield
\begin{align}
\label{second_termK2}
& \sqrt{n}\|h'\|\frac{7}{3}\sum_{j=1}^2\sum_{k=1}^2\sum_{s=1}^2\left[\mathbb{E}\left(Q_j^2Q_k^2Q_s^2\right)\right]^{\frac{1}{2}}\left[\mathbb{E}\left[\left(M_{jkm}(\boldsymbol{X})\right)^2\middle|\left|Q_{(m)}\right|<\epsilon\right]\right]^{\frac{1}{2}}\\
\nonumber & = \|h'\|\left\lbrace\vphantom{(\left(\sup_{\theta:|\theta-\theta_0|\leq\epsilon}\left|l^{(3)}(\theta;\boldsymbol{X})\right|\right)^2}\frac{7\sqrt{6}\sigma^4}{\left(\sigma^2 - \epsilon\right)^2} + \frac{56\sqrt{2}\sigma^5}{\left(\sigma^2 - \epsilon\right)^3}\sqrt{\frac{\sigma^2}{n}+\epsilon^2}\right.\\
&\left. \qquad + \frac{7\sqrt{2120}\sigma^6}{3\left(\sigma^2 - \epsilon\right)^3}\sqrt{1 + \frac{162}{\left(\sigma^2 - \epsilon\right)^2}\left((\epsilon + \epsilon^2 +\sigma^2)^2 + \frac{3\sigma^4}{n^2}\right)}\vphantom{(\left(\sup_{\theta:|\theta-\theta_0|\leq\epsilon}\left|l^{(3)}(\theta;\boldsymbol{X})\right|\right)^2}\right\rbrace. \nonumber 
\end{align}
The third term of \eqref{K2} requires the calculation of conditional expectations related to $T_{kj}$ of \eqref{cm} and $M_{qml}(\boldsymbol{X})$, where $k,j,q,m,l \in \left\lbrace 1,2 \right\rbrace$. It is easy to see that both $T_{12}$ and $T_{22}$ can be written as continuous, increasing functions of $Q_1$ and $Q_2$. Therefore, with $Q_{(m)}$ as in \eqref{cm} and for $G_{n} \sim \chi^2_n$, employing Lemma 4.1 of \cite{anastasiou2015assessing}, leads to
\begin{align}
\label{boundT_ij4}
& \mathbb{E}\left(T_{11}^4\middle|\left|Q_{(m)}\right|<\epsilon\right) = 0\\
\nonumber & \mathbb{E}\left(T_{12}^4\middle|\left|Q_{(m)}\right| <\epsilon\right) 
= \mathbb{E}\left(\frac{1}{\sigma^{16}}\left(\sum_{i=1}^n(X_i - \mu)\right)^4\middle|\left|Q_{(m)}\right|<\epsilon\right)\\
\nonumber & 
\qquad\qquad\qquad\qquad\, 
\leq \frac{n^4}{\sigma^{16}}\mathbb{E}\left(\bar{X} - \mu\right)^4 = \frac{3n^2}{\sigma^{12}}\\
\nonumber & \mathbb{E}\left(T_{22}^4\middle|\left|Q_{(m)}\right| <\epsilon\right) = \mathbb{E}\left(\left(\frac{n}{\sigma^4} - \frac{1}{\sigma^2}\sum_{i=1}^n(X_i - \mu)^2\right)^4\middle|\left|Q_{(m)}\right| <\epsilon\right)\\
\nonumber & \qquad\qquad\qquad\qquad\, \leq \mathbb{E}\left(\frac{n}{\sigma^4} - \frac{1}{\sigma^2}\sum_{i=1}^n(X_i - \mu)^2\right)^4\\
& \qquad\qquad\qquad\qquad\, = \frac{1}{\sigma^{16}}\mathbb{E}\left(G_{n} - n\right)^{4} = \frac{12n^2}{\sigma^{16}}\left(1+\frac{4}{n}\right) \leq \frac{60n^2}{\sigma^{16}}. \nonumber 
\end{align}
Using the results of \eqref{supremumschapter41}, \eqref{supremumschapter42}, and \eqref{boundT_ij4}, then after simple calculations, we get for the third term of \eqref{K2} that
\begin{align}
\label{third_termK2}
 & \frac{\|h'\|}{\sqrt{n}}\sum_{q=1}^{2}\sum_{k=1}^{2}\left|\left[\left[I(\boldsymbol{\theta_0})\right]^{-1}\right]_{kq}\right|\sum_{j=1}^{2}\sum_{l=1}^{2}\sum_{s=1}^{2}\left[\mathbb{E}\left(Q_j^2Q_l^2Q_s^2\right)\right]^{\frac{1}{2}}\left[\mathbb{E}\left(T_{kj}^4\middle|\left|Q_{(m)}\right|<\epsilon\right)\right]^{\frac{1}4}\\
\nonumber & \qquad\qquad\qquad\qquad\qquad\qquad\qquad\qquad\qquad \times \left[\mathbb{E}\left(M_{qml}^4(\boldsymbol{X})\middle|\left|Q_{(m)}\right|<\epsilon\right)\right]^{\frac{1}{4}}\\
\nonumber & <  \frac{\|h'\|}{\sqrt{n}}\left(\frac{35\sigma^4}{\left(\sigma^2 - \epsilon\right)^2} + \frac{339\sigma^6}{\left(\sigma^2 - \epsilon\right)^3}\left(\frac{3}{n^2}+\left(\frac{\epsilon}{\sigma}\right)^4\right)^{\frac{1}{4}}\right)\\
& \qquad +\frac{2700\sigma^6\|h'\|}{\sqrt{n}\left(\sigma^2 - \epsilon\right)^3}\left(1+\frac{52488}{\left(\sigma^2 - \epsilon\right)^{4}}\left(\left(\epsilon+\epsilon^2+\sigma^2\right)^4+\frac{105\sigma^8}{n^4}\right)\right)^{\frac{1}{4}} . \nonumber
\end{align}
To find an upper bound for the fourth term of \eqref{K2}, using \eqref{multi_normal_FISHER}, \eqref{supremumschapter41}, \eqref{supremumschapter42}, 
\begin{align} \label{fourth_termK2}
& \frac{\|h'\|}{4\sqrt{n}}\sum_{b=1}^{2}\sum_{k=1}^{2}\sum_{s=1}^{2}\sum_{q=1}^{2}\sum_{l=1}^{2}\sum_{j=1}^{2}\left|\left[\left[I(\boldsymbol{\theta_0})\right]^{-1}\right]_{qb}\right|\sqrt{\mathbb{E}\left(Q_k^2Q_s^2Q_j^2Q_l^2\right)}\\
\nonumber & \qquad\quad \times \left[\mathbb{E}\left(\left(M_{bmk}(\boldsymbol{X})\right)^4\middle|\left|Q_{(m)}\right|<\epsilon\right)\right]^{\frac{1}{4}}\left[\mathbb{E}\left(\left(M_{qjl}(\boldsymbol{X})\right)^4\middle|\left|Q_{(m)}\right|<\epsilon\right)\right]^{\frac{1}{4}}\\
\nonumber & < \frac{\|h'\|}{\sqrt{n}}\left\lbrace \frac{13\sigma^8}{\left(\sigma^2 - \epsilon\right)^4} + \frac{147\sigma^{10}}{\left(\sigma^2 - \epsilon\right)^{5}}\left(\frac{3}{n^2}+\left(\frac{\epsilon}{\sigma}\right)^4\right)^{\frac{1}{4}} +\frac{1361\sigma^{12}}{\left(\sigma^2 - \epsilon\right)^6}\sqrt{\frac{3}{n^2}+\left(\frac{\epsilon}{\sigma}\right)^4}\right.\\
\nonumber & \left.\qquad\quad + \left(1 + \frac{52488}{\left(\sigma^2 - \epsilon\right)^{4}}\left(\left(\epsilon+\epsilon^2+\sigma^2\right)^4+\frac{105\sigma^8}{n^4}\right)\right)^{\frac{1}{4}}\right.\\
\nonumber & \left. \qquad\quad\quad \times \left(\frac{12\sigma^{10}}{\left(\sigma^2 - \epsilon\right)^5} + \frac{369\sigma^{12}}{\left(\sigma^2-\epsilon\right)^6}\left(\frac{3}{n^2}+\left(\frac{\epsilon}{\sigma}\right)^4\right)^{\frac{1}{4}}\right)\right.\\
& \left. \qquad\quad + \frac{\sqrt{362096}\sigma^{12}}{\left(\sigma^2 - \epsilon\right)^6}\sqrt{1 + \frac{52488}{\left(\sigma^2 - \epsilon\right)^{4}}\left(\left(\epsilon+\epsilon^2+\sigma^2\right)^4+\frac{105\sigma^8}{n^4}\right)} .\right\rbrace \nonumber 
\end{align}
The bounds in \eqref{first_termK2}, \eqref{second_termK2}, \eqref{third_termK2}, and \eqref{fourth_termK2} depend on the constant $\epsilon$ as defined in the statement of Theorem \ref{Theorem_i.n.i.d}. For the choice of $\epsilon$, \eqref{supremumschapter41}, \eqref{supremumschapter42} and \eqref{supremumschapter43} require that $0< \epsilon<\sigma^2$. There is trade off related to the choice of $\epsilon$ between the expressions \eqref{first_termK2}, and \eqref{fourth_termK2}. We choose $\epsilon = \frac{\sigma^2}{2}$. Using this value in \eqref{first_termK2}, \eqref{second_termK2}, \eqref{third_termK2}, and \eqref{fourth_termK2}, leads to
\begin{align}
\label{boundK2}
 & K_2(\boldsymbol{\theta_0}) < \frac{8\|h\|}{\sqrt{n}\sigma^2}\left(1+2\sigma^2\right) + \|h'\|\left\lbrace\vphantom{(\left(\sup_{\theta:|\theta-\theta_0|\leq\epsilon}\left|l^{(3)}(\theta;\boldsymbol{X})\right|\right)^2} 28\sqrt{6} + 448\sqrt{\frac{2}{n} +\frac{\sigma^2}{2}} + \frac{348}{\sqrt{n}}\right.\\
\nonumber & \left.\;+ 860\sqrt{1 + 648\left(\left(\frac{3}{2}+\frac{\sigma^2}{4}\right)^2+\frac{3}{n^2}\right)} + \frac{7416}{\sqrt{n}}\left(\frac{3}{n^2}+\frac{\sigma^4}{16}\right)^{\frac{1}{4}}\right.\\
\nonumber & \left. \; + \frac{1}{\sqrt{n}}\left(1+839808\left(\left(\frac{3}{2}+\frac{\sigma^2}{4}\right)^4+\frac{105}{n^4}\right)\right)^{\frac{1}{4}}\left(21984 + 23616\left(\frac{3}{n^2}+\frac{\sigma^4}{16}\right)^{\frac{1}{4}}\right)\right.\\
& \left. \; + \frac{87104}{\sqrt{n}}\sqrt{\frac{3}{n^2} + \frac{\sigma^4}{16}} + \frac{38512}{\sqrt{n}}\sqrt{1 + 839808\left(\left(\frac{3}{2}+\frac{\sigma^2}{4}\right)^4+\frac{105}{n^4}\right)}\vphantom{(\left(\sup_{\theta:|\theta-\theta_0|\leq\epsilon}\left|l^{(3)}(\theta;\boldsymbol{X})\right|\right)^2}\right\rbrace. \nonumber
\end{align}
It remains to find an upper bound for $K_2^*(\boldsymbol{\theta_0})$, which is the version of $K_2(\boldsymbol{\theta_0})$ under the null hypothesis of $\mu = 0$. This requires the calculation of conditional expectations related to $T^*_{11}$ of \eqref{cm} and $M^*_{111}(\boldsymbol{X})$ as defined in (O1). Simple calculations yield
\begin{align}
\nonumber & M^*_{111}(\boldsymbol{X}) = \frac{n}{\left(\sigma^2 - \epsilon\right)^3} + \frac{3}{\left(\sigma^2 - \epsilon\right)^4}\sum_{i=1}^nX_i^2, \qquad T^*_{11} = \frac{n}{\sigma^4} - \frac{1}{\sigma^6}\sum_{i=1}^nX_i^2.
\end{align}
Using the above results and Lemma 4.1 from \cite{anastasiou2015assessing}, we obtain that for $\epsilon = \frac{\sigma^2}{2}$,
\begin{align}
\label{boundK2*}
 & K_2^*(\boldsymbol{\theta_0}) = 2\sqrt{n}\frac{\|h\|}{\epsilon^2}\mathbb{E}\left(Q^*_1\right)^2 + \sqrt{n}\|h'\|\frac{7}{3}\sqrt{\mathbb{E}\left(Q^*_1\right)^6}\sqrt{\mathbb{E}\left[\left(M^*_{111}(\boldsymbol{X})\right)^2\middle|\left|Q^*_{(m)}\right|<\epsilon\right]}\\
\nonumber & \;\; + \frac{2\|h'\|\sigma^4}{\sqrt{n}}\sqrt{\mathbb{E}\left(Q^*_j\right)^6}\left[\mathbb{E}\left(\left(T^*_{11}\right)^4\middle|\left|Q^*_{(m)}\right|<\epsilon\right)\right]^{\frac{1}{4}}\left[\mathbb{E}\left(\left(M^*_{111}(\boldsymbol{X})\right)^4\middle|\left|Q^*_{(m)}\right|<\epsilon\right)\right]^{\frac{1}{4}}\\
\nonumber & \;\; + \frac{\sigma^4\|h'\|}{2\sqrt{n}}\sqrt{\mathbb{E}\left(Q^*_1\right)^8}\sqrt{\mathbb{E}\left(\left(M^*_{111}(\boldsymbol{X})\right)^4\middle|\left|Q^*_{(m)}\right|<\epsilon\right)}\\
& < \frac{16\|h\|}{\sqrt{n}} + 15958\|h'\| + \frac{13527046}{\sqrt{n}}\|h'\| . \nonumber
\end{align}
Applying the results of \eqref{final_bound_R_WLD}, \eqref{boundK1}, \eqref{bound_K1^*}, \eqref{boundK2}, and \eqref{boundK2*}, to the expression of the general upper bound in \eqref{final_bound_regression} yields the assertion of the corollary as expressed in \eqref{finalboundNormal}.  
\end{proof}
\subsection{Example: Logistic regression} 
In binomial regression, the data are i.i.d. observations $(\boldsymbol{X_i}, Y_i), i=1, \ldots, n$, where $\boldsymbol{X_i} \in \RR^d$ and $Y_i \in \{0 , 1\}$, see for example \cite{van1998asymptotic}, p.66. The binary regression model is that 
$$\PP_{\boldsymbol{\theta}} (Y_i = 1 | \boldsymbol{X_i} = \boldsymbol{x}) = \psi (\boldsymbol{\theta}^{\intercal}\boldsymbol{x})$$ for $\boldsymbol{\theta} \in \RR^d$ and $\psi: \RR \rightarrow [0,1]$ continuously differentiable, monotone, with derivatives bounded away from 0 and $\infty$. For logistic regression, $$\psi(\theta) = \frac{1}{1+e^{-\theta}}$$
and in this example we restrict ourselves to this case, although generalisations are straightforward. We assume that the distribution of $\boldsymbol{X}$ is such that $\boldsymbol{X}$ has finite moments up to order 6. To ensure that the MLE $\boldsymbol{\hat{\theta}_n(X)}$ for $\boldsymbol{\theta}$ exists and is unique, we assume that the $\boldsymbol{X_i}$'s do not concentrate on a $(d-1)$-dimensional affine subspace of $\RR^d$. 

Consider as in \cite{sur2017likelihood} to test the simple hypothesis that $\boldsymbol{\theta_0} =\boldsymbol{0}$ against the general alternative. The likelihood in this case is 
$$L(\boldsymbol{\theta}; (\boldsymbol{x_i}, y_i), i=1, \ldots, n)= \prod_{i=1}^n \left\{ \psi ( \boldsymbol{\theta}^{\intercal} \boldsymbol{x_i})^{y_i} ( 1 - \psi (\boldsymbol{\theta}^{\intercal} \boldsymbol{x_i}))^{1-y_i}\right\} $$
so that the score function is 
$$S(\boldsymbol{\theta}) = \frac{y - \psi (\boldsymbol{\theta}^{\intercal} \boldsymbol{x})}{\psi (\boldsymbol{\theta}^{\intercal} \boldsymbol{x}) ( 1- \psi (\boldsymbol{\theta}^{\intercal} \boldsymbol{x})) } \psi' (\boldsymbol{\theta}^{\intercal}\boldsymbol{x})\boldsymbol{x}$$
while the Fisher information matrix is $I(\boldsymbol{\theta}) = \EE \left[  \psi'(\boldsymbol{\theta}^{\intercal} \boldsymbol{X})) \boldsymbol{X} \boldsymbol{X}^{\intercal} \right].$
For testing $H_0: \theta_1 = 0$ we have for $\boldsymbol{x} = (x_1, \ldots, x_d)$ 
$$\xi (\boldsymbol{x}) = \frac{1}{\sqrt{n}} ( y_1 - \psi (\boldsymbol{\theta}^{\intercal} \boldsymbol{x})) x_1$$
and 
$$ \boldsymbol{\eta} (\boldsymbol{x}) = \frac{1}{\sqrt{n}}\left( ( y_2 - \psi (\boldsymbol{\theta}^{\intercal} \boldsymbol{x}) )x_2, 
\ldots, ( y_p - \psi (\boldsymbol{\theta}^{\intercal} \boldsymbol{x}) )x_p \right)^{\intercal}.$$

To check the assumptions on the third derivative of the log-likelihood we calculate 
\begin{equation} 
\nonumber \frac{\partial^3}{\partial \theta_k \partial \theta_j \partial \theta_i} \ell (\boldsymbol{\theta}; (\boldsymbol{x}, y)) = \frac{e^{\boldsymbol{\theta}^{\intercal} \boldsymbol{x}}(1 + e^{2 \boldsymbol{\theta}^{\intercal} \boldsymbol{x}} - 4 e^{\boldsymbol{\theta}^{\intercal} \boldsymbol{x}})}{ \left(1+ e^{\boldsymbol{\theta}^{\intercal} \boldsymbol{x}}\right)^4}x_i x_j x_k 
\end{equation}
so that 
\begin{equation}
\nonumber \left| \frac{\partial^3}{\partial \theta_k \partial \theta_j \partial \theta_i} \ell (\boldsymbol{\theta}; (\boldsymbol{x}, y)) \right| \le |x_i x_j x_k | =: M_{k,j,i}(\boldsymbol{x}).
\end{equation}
In particular $\EE ( M_{k,j,i}(\bX)^2) = \EE ( X_i^2 X_j^2 X_k^2) $
and  $\EE ( M_{k,j,i}(\bX)^4) = \EE ( X_i^4 X_j^4 X_k^4) $. 

In order to apply Proposition \ref{gauntreinertcor} we use the variables
$$Z_{i,j} =   \frac{\partial}{\partial \theta_j} 
(y_i \log \psi (\boldsymbol{\theta}^{\intercal} \boldsymbol{x_i}) + (1-y_i) \log (1 - \psi (\boldsymbol{\theta}^{\intercal} \boldsymbol{x_i})) = \left( y_i - \psi (\boldsymbol{\theta}^{\intercal} \boldsymbol{x_i}) \right)x_{i,j}$$ 
for $i=1, \ldots, n, j=1, \ldots, d$; they play the role of the $Y_{i,j}$ from Proposition \ref{gauntreinertcor}. 
Due to the binomial structure and the fact that $ |y_i - \psi (\boldsymbol{\theta}^{\intercal} \boldsymbol{X_i}) | \le 1$ it holds that
$\EE | Z_{i,j}| \le \EE |X_{i,j}|. $
We bound the third and fifth moments in \eqref{RWLD}, by defining
\begin{align}
\nonumber & \mu_j^{(3)} = \max_{i_1, i_2, i_3 \in \left\lbrace 1,2,\ldots,d\right\rbrace}\left\lbrace\EE | Y_{i_1,j} Y_{i_2, j}Y_{i_3,j}|, \;\left|\mathbb{E}\left(Y_{i_1,j}Y_{i_2,j}\right)\right|\EE\left|Y_{i_3,j}\right|\right\rbrace\\
\nonumber & \mu_j^{(5)} = \max_{i_1, i_2, \ldots, i_5 \in \left\lbrace 1,2,\ldots,d\right\rbrace}\left\lbrace \EE | Y_{i_1,j} Y_{i_2, j}Y_{i_3,j}Y_{i_4,j}Y_{i_5,j}|,\;\left|\mathbb{E}\left(Y_{i_1,j}Y_{i_2,j}\right)\right|\EE\left|Y_{i_3,j}Y_{i_4,j}Y_{i_5,j}\right|\right\rbrace.
\end{align}
In this example, for $k=3,5$,
$$ \mu_j^{(k)} \le   \max_{i_1, \ldots, i_k \in \{1, \ldots, d\} } \EE \left| \prod_{s=1}^k X_{j, i_s} \right| .$$  
Note that for logistic regression the MLE in general does not have a closed form. Hence we cannot evaluate \eqref{cm} explicitly, although with given data sets a numerical evaluation is possible. For our purposes it suffices to illustrate the applicability of the bound as well as its behaviour in terms of $d$ and $n$. 

From Remark \ref{remark_multi_order}. \textbf{(5)}, the overall order of the bound in the chisquare approximation for the likelihood ratio test is at most $d^7 n^{-\frac12}$ and therefore, the chisquare approximation is justified when $ d = o(n^{\frac{1}{14}})$. This bound is lower than the bound of $d = o(n^{\frac23})$ reported in \cite{portnoy1988asymptotic}, but our bound is explicit and is derived in a more general setting.

\bibliographystyle{Chicago}
\bibliography{WilksJune10_general}

\end{document}